\DeclareOldFontCommand{\bf}{\normalfont\bfseries}{\mathbf}
\newtheorem{theorem}{Theorem}[section]
\newtheorem{corollary}[theorem]{Corollary}
\newtheorem{lemma}[theorem]{Lemma}
\theoremstyle{definition}
\newtheorem{definition}[theorem]{Definition}
\newtheorem{remark}[theorem]{Remark}
\newtheorem{example}[theorem]{Example}
\newcommand{\introthmname}{}
\newtheorem{introthminn}{\introthmname}
\newtheorem{introthminn1}{\introthmname}
\newcommand{\sophie}[1]{{\color{red}\textsf{$\clubsuit\clubsuit\clubsuit$}  Sophie: [#1]}}
\newcommand{\eliza}[1]{{\color{blue}\textsf{$ \spadesuit \spadesuit \spadesuit$}  eliza: [#1]}}
\newcommand{\nn}[1]{ \llbracket 1,  #1 \rrbracket}
\newcommand{\oid}[1]{\operatorname{id}}
\newcommand{\tr}[1]{\mathfrak{t}_{\zeta_{#1}}^{\pm}}
\newcommand{\ts}[1]{\mathfrak{t}_{\!\zeta_{\! #1}}^{+}}
\newcommand{\tq}[1]{\mathfrak{t}_{\! \zeta_{\! #1}}^{-}}
\newcommand\restr[2]{{
		\left.\kern-\nulldelimiterspace 
		#1 
		\vphantom{\big|} 
		\right|_{#2} 
}}
\title{\bfseries When is a 2-Power Cyclotomic Extension cyclic?}
\author{Sophie Marques and Elizabeth Mrema}
\begin{document}
	
	\setcounter{tocdepth}{3}
	\maketitle
	\begin{center}
		\rm e-mail: smarques@sun.ac.za
		
		\it
		Department of Mathematical Sciences, 
		Stellenbosch University, \\
		Stellenbosch, 7600, 
		South Africa\\
		\&
		NITheCS (National Institute for Theoretical and Computational Sciences), \\
		South Africa \\  \bigskip
		
		\rm e-mail: 25138413@sun.ac.za
		
		\it
		Department of Mathematical Sciences, 
		Stellenbosch University, \\
		Stellenbosch, 7600, 
		South Africa
	\end{center} 
	\tableofcontents

	\begin{abstract}
	This paper characterizes the cyclicity property of $2$-power cyclotomic extensions through various means: the structure of the Galois groups, the nature of their subextensions, tower decompositions, and, most importantly, specific conditions on the base field. \\
	
		
		\noindent \textbf{Keywords.} cyclotomic field extensions, subextensions, minimal polynomials,\\ 
		Galois group, tower.
		\vspace*{\fill}
		
		\noindent \textbf{2020 Math. Subject Class.}  12F05, 12E05, 12E12, 12E20, 12E10, 12F10, 11R18, 11R11 
	\end{abstract}

	\section*{Introduction}

Given a positive integer \( e \) and a field \( F \), the Galois group of the cyclotomic extension \( F(\zeta_{2^e})/F \), referred to as a 2-power cyclotomic extension, is isomorphic to a subgroup of the unit group \(\left(\mathbb{Z}/2^e\mathbb{Z}\right)^\times\) (see \cite[Theorem 2.3]{conradcyclotomic}). For \( e > 2 \), this group \(\left(\mathbb{Z}/2^e\mathbb{Z}\right)^\times\) is not cyclic (see \cite[Chapter 4, Theorem 2']{kenneth-classical}), indicating that the $2$-power cyclotomic extension is not always cyclic. For example, over the field \(\mathbb{Q}\), the Galois group of the cyclotomic extension \(\mathbb{Q}(\zeta_{16})/\mathbb{Q}\) is isomorphic to \(\left(\mathbb{Z}/16\mathbb{Z}\right)^\times\), which is isomorphic to the non-cyclic group \(\mathbb{Z}/2\mathbb{Z} \times \mathbb{Z}/4\mathbb{Z}\). The structure of this extension in terms of tower decompositions is as follows:
	\begin{center}
	\scalebox{0.7}{	\begin{tikzpicture}[scale=0.8] 
			\node (Q1) at (0,0) {$\mathbb{Q}$};
			\node (Q2) at (3,2) {$\mathbb{Q}(\zeta_8+\zeta_8^{-1})$};
			\node (Q3) at (0,2) {$\mathbb{Q}(\zeta_8-\zeta_8^{-1})$};
			\node (Q4) at (-3,2) {$\mathbb{Q}(\zeta_4)$};
			\node (Q5) at (-4,4) {$\mathbb{Q}(\zeta_8)$};
			\node (Q6) at (0,4) {$\mathbb{Q}(\zeta_{16}-\zeta_{16}^{-1})$};
			\node (Q7) at (4,4) {$\mathbb{Q}(\zeta_{16}+\zeta_{16}^{-1})$};
			\node (Q8) at (0,8) {$\mathbb{Q}(\zeta_{16})$};
			\draw (Q1)--(Q2) node [pos=0.7, below,inner sep=0.25cm] {2};
			\draw (Q1)--(Q4) node [pos=0.7, below,inner sep=0.25cm] {2};
			\draw (Q1)--(Q3) node [pos=0.7, right,inner sep=0.16cm] {2};
			\draw (Q4)--(Q5) node [pos=0.7, below,inner sep=0.25cm] {2};
			\draw (Q3)--(Q5) node [pos=0.7, below,inner sep=0.25cm] {2};
			\draw (Q2)--(Q5) node [pos=0.4, above,inner sep=0.16cm] {2};
			\draw (Q2)--(Q6) node [pos=0.4, above,inner sep=0.25cm] {2};
			\draw (Q2)--(Q7) node [pos=0.7, below,inner sep=0.25cm] {2};
			\draw (Q6)--(Q8) node [pos=0.5, right,inner sep=0.16cm] {2};
			\draw (Q7)--(Q8) node [pos=0.7, below,inner sep=0.25cm] {2};
			\draw (Q5)--(Q8) node [pos=0.7, below,inner sep=0.25cm] {2};
		\end{tikzpicture}}
	\end{center} 	
\noindent When the base field is extended from \(\mathbb{Q}\) to \(\mathbb{Q}(\zeta_4)\), the 2-power cyclotomic extension \(\mathbb{Q}(\zeta_4)(\zeta_{16})/\mathbb{Q}(\zeta_4)\) becomes cyclic. Specifically, we obtain a unique tower decomposition of \(\mathbb{Q}(\zeta_4)(\zeta_{16})/\mathbb{Q}(\zeta_4)\) as follows:
	\begin{center}
			\scalebox{0.7}{\begin{tikzpicture}
			\node (Q1) at (0,0) {$\mathbb{Q}(\zeta_4)$};
			\node (Q2) at (0,2) {$\mathbb{Q}(\zeta_4)(\zeta_8+\zeta_8^{-1})=\mathbb{Q}(\zeta_4)(\zeta_8)=\mathbb{Q}(\zeta_4)(\zeta_8-\zeta_8^{-1})$};
			\node (Q3) at (0,4) {$\mathbb{Q}(\zeta_4)(\zeta_{16}+\zeta_{16}^{-1})=\mathbb{Q}(\zeta_4)(\zeta_{16})=\mathbb{Q}(\zeta_4)(\zeta_{16}-\zeta_{16}^{-1})$};
			\draw (Q1)--(Q2) node [pos=0.7,right,inner sep=0.25cm] {2};
			\draw (Q2)--(Q3) node [pos=0.7,right,inner sep=0.25cm] {2};
		\end{tikzpicture}}
	\end{center}
	
\noindent Another example is over the field of positive characteristic \(\mathbb{F}_3\). It is well known that \(\mathbb{F}_3(\zeta_{16})/\mathbb{F}_3\) is a cyclic extension of degree 4 (see \cite[Theorem 4.1]{conradcyclotomic}). Moreover, over \(\mathbb{F}_3\), the cyclotomic polynomial \(\Phi_{16}(x)\) splits into the irreducible polynomials \(x^4 - x^2 - 1\) and \(x^4 + x^2 - 1\). Thus, \(\zeta_{16}\) is a root of either \(x^4 - x^2 - 1\) or \(x^4 + x^2 - 1\), which implies \(\zeta_8\) is a root of \(x^2 + x - 1\) or \(x^2 - x - 1\). In both cases, for a non-trivial element \(\sigma \in \operatorname{Gal}(\mathbb{F}_3(\zeta_{16})/\mathbb{F}_3)\), \(\sigma(\zeta_8) = -\zeta_8^{-1}\). Thus, \(\zeta_8 - \zeta_8^{-1} \in \mathbb{F}_3\). From this, we obtain the following unique tower decomposition of \(\mathbb{F}_3(\zeta_{16})/\mathbb{F}_3\):
		\begin{center}
		\scalebox{0.7}{\begin{tikzpicture}
			\node (Q1) at (0,0) {$\mathbb{F}_3=\mathbb{F}_3(\zeta_8-\zeta_8^{-1})$};
			\node (Q2) at (0,2) {$\mathbb{F}_3(\zeta_8)=\mathbb{F}_3(\zeta_8+\zeta_8^{-1})$};
			\node (Q3) at (0,4) {$\mathbb{F}_3(\zeta_{16}+\zeta_{16}^{-1})=\mathbb{F}_3(\zeta_{16})=\mathbb{F}_3(\zeta_{16}-\zeta_{16}^{-1})=\mathbb{F}_{3^4}$};
			\draw (Q1)--(Q2) node [pos=0.7,right,inner sep=0.25cm] {2};
			\draw (Q2)--(Q3) node [pos=0.7,right, inner sep=0.25cm] {2};
		\end{tikzpicture}}
	\end{center}
Through these examples, we observe different properties that affect the cyclicity of 2-power cyclotomic extensions. In this paper, we provide various characterizations for when a 2-power cyclotomic extension is cyclic.

We consistently assume that \([F(\zeta_{2^e}):F] > 2\). This is because the cyclicity property always holds when \([F(\zeta_{2^e}):F] \leq 2\), and a thorough examination of the quadratic case has been conducted in \cite{marques-quadratic}.

In \cite{marques-quadratic}, we demonstrated that the constant \(\nu_{2^\infty_F}\), as defined in Definition \ref{def-tfn}, is essential for describing quadratic cyclotomic extensions. This constant is also crucial in determining the cyclicity of 2-power cyclotomic extensions. Our main contribution in this paper is to characterize the cyclicity of 2-power cyclotomic extensions based solely on a condition of the base field (see Theorem \ref{maincyclic} and Theorem \ref{noncyclic-theorem}). Specifically, we prove that:

\[
F(\zeta_{2^e})/F \text{ is cyclic if and only if } \zeta_4 \in F \text{ or } \zeta_4 \notin F \text{ and } \zeta_{2^{\nu_{2^\infty_F}}} - \zeta_{2^{\nu_{2^\infty_F}}}^{-1} \in F.
\]
A characterization based on the base field was expected due to the nature of cyclotomic extensions. However, our literature review did not find any existing characterization of this kind. Furthermore, the condition on the base field that ensures cyclicity can be directly verified by computer for finite fields, for instance.

The constants \(\nu_{2^\infty_F}\) and \(\zeta_{2^{\nu_{2^\infty_F}}} - \zeta_{2^{\nu_{2^\infty_F}}}^{-1}\) play a crucial role in Schinzel's theorem, which classifies radical extensions up to isomorphism (see \cite[Theorem 3]{schinzel}, \cite[Theorem 2.1]{several}). It is well-known that simple radical extensions are Kummer if and only if they have a unique isomorphism type: two simple radical extensions over a field \( F \) of degree \( n \), defined by the equations \( x^n - a \) and \( x^n - b \), respectively, are isomorphic if and only if \( a = c^n b^j \) for some \( c \in F \) and an integer \( j \) coprime with \( n \). 

Schinzel demonstrated the emergence of another type of relation in the non-Kummer case, specifically involving the constants \(\nu_{2^\infty_F}\) and \(\zeta_{2^{\nu_{2^\infty_F}}} - \zeta_{2^{\nu_{2^\infty_F}}}^{-1}\). However, his work and subsequent studies utilizing his theorem such as \cite[Theorem 4.2]{subfieldsradicalextension}, \cite[Theorem 2.1]{several}, did not clarify precisely when this new type of relation arises or whether it can be determined directly from a condition on the base field. 

More recent work \cite{zhang2017grunwald} attempted to address this question, obtaining a partial answer in characteristic 0 \cite[Theorem 3.1.4]{zhang2017grunwald}, and mentioning in a subsequent remark the expected difficulty of obtaining a full characterization even in characteristic 0. 

With the present characterization of cyclicity, we can now prove that this new isomorphism type appears precisely when those radical extensions contain a non-cyclic 2-power cyclotomic extension. Furthermore, we can characterize this property with a condition on the base field and on the coefficient of the radical polynomial \cite[Theorem 5.2.6]{thesis}.

The paper is structured as follows: We begin by introducing notation and presenting elementary results. In the following section, we state and prove our first main theorem, characterizing the cyclicity of 2-power cyclotomic extensions by means of the structure of the Galois groups, the nature of their subextensions, tower decompositions, and, most importantly, specific conditions on the base field (Theorem \ref{maincyclic}). In the third section, we study the properties of the subextensions of 2-power cyclotomic extensions \( F(\zeta_{2^e}) /F \) generated by \(\zeta_{2^e} \pm \zeta_{2^e}^{-1}\) (see Theorem \ref{te}). We conclude this paper by characterizing the non-cyclicity of 2-power cyclotomic extensions, which is not simply the negation of the cyclic case (see Theorem \ref{noncyclic-theorem}).

\newpage
	
	\section*{Notation and symbols}
	In this paper, $F$ denote  a field with characteristic $\wp$, $\overline{F}$ is one fixed algebraic closure of the field $F$, and $e$ is a positive integer. 
	
	To maintain simplicity in this paper, we will assume that all discussed field extensions are subfields of the chosen algebraic closure $\overline{F}$.
	In the following table, we present notation and symbols that will be utilized throughout this paper.
	\begin{center} 
		\begin{tabular}{l p{11cm} }
			$\mathbb{N}$ & The set of natural number including zero.\\
			$\llbracket  m, n  \rrbracket$ & $\{ m, m+1,\cdots , n \}$ where $m, n \in \mathbb{N}$ such that $m \leq n$.\\
			$U_{n}$ & The subset of $\mathbb{Z}/n\mathbb{Z}$ consisting of invertible elements under multiplication where $n\in \mathbb{N}$.\\
			$[j]_{n}$ & The equivalence class of $j\in \mathbb{Z}$ modulo $n$. We shall simply use the notation $[j]$ when $n$ is clear from the context.\\
			$\sigma_k$ & The $F$-homomorphism from $F(\zeta_n)$ to $F(\zeta_n)$ sending $\zeta_n$ to $\zeta_n^k$, where $k\in \nn{n-1}$.\\
			$[\sigma]_H$ & The equivalence class of $\sigma\in G$ in the quotient group $G\big /H$ where $G$ is a galois group. We shall simply use the notation $[\sigma]$ when $H$ is clear from the context.\\
			${\langle [a] \rangle}_G$ & A subgroup generated by $[a]$ where $G$ is either $\mathbb{Z}/n\mathbb{Z}$ or $U_n$ with $n\in \mathbb{N}$. \\
			$o_F(\alpha)$ & The order of an element $\alpha\in \overline{F}$ over a field $F$.\\
			$\operatorname{min}(\alpha, F)$ & The minimal polynomial of an algebraic element $\alpha\in \overline{F}$ over the field $F$.\\
			$\tr{2^e}$ & $\zeta_{2^e} \pm \zeta_{2^e}^{-1}$.\\
			
$\operatorname{t}_F(2^e)$ & $\left\{ \begin{array}{clll} 
	2^e & \text{when } \operatorname{o}_F(\zeta_{2^e})>2;\\
	2 & \text{when }  \operatorname{o}_F(\zeta_{2^e})=2;\\
	1 & \text{when } \operatorname{o}_F(\zeta_{2^e})=1.
\end{array} \right.$ \\
			$\nu_{2^\infty_F}^{+}$ & $\left\{ \begin{array}{clll}\operatorname{max}\{k \in \mathbb{N} | \zeta_{t_F(2^k)} +
				\zeta_{t_F(2^k)}^{-1} \in F\} & \text{ when it exists }; \\
				\infty & \text{ otherwise.} \end{array} \right.$\\
			$\nu_{2^\infty_F}$ & $\left\{ \begin{array}{clll} \nu_{2^\infty_F}^++1 & \text{when}\ F \ \text{has property } \ \mathcal{C}_2 \ (\text{see \cite[Definition 3.13]{marques-quadratic}}) ; \\ 
				\nu_{2^\infty_F}^+ & \text{ otherwise. }  \end{array} \right.$ 
		\end{tabular}
	\end{center}
	\newpage
	\section{Definitions and elementary results}
		We begin by presenting the following fundamental definitions that establish the notational conventions utilized throughout this paper. We start by introducing  the concept of  field subextensions and then proceed to define the $2$-tower decomposition of  a  field extension. 
	\begin{definition}
		Given two extensions $K/F$ and $M/F$. We say that $M/F$ is {\sf a subextension of $K/F$} if $F \subseteq M \subseteq K$. Furthermore, we say that $M/F$ is {\sf a subextension of $K/F$ of codegree $n$} if $M/F$ is an subextension of $K/F$ and $[K:M]=n.$
	\end{definition} 
	
	\begin{definition}
		Given an extension $K/F$ of degree $2^e$. We call a {\sf $2$-tower decomposition} of $K/F$ denoted by $K_s/K_{s-1}/\cdots/K_1/K_0$ where $K_0=F$, $K_s=K$, $K_{i}/F$ is an intermediate subextension of $K_{i+1}/F$ of codegree $2$ for all $i\in \llbracket0, s-1\rrbracket$.  
	\end{definition} 
	
	In the following definition, we introduce essential constants and a function that will be used throughout this paper (see \cite[Definition 2.3.1]{thesis}, \cite[Definition 2.3.15]{thesis} and \cite[Definition 2.3.18]{thesis}). 
\begin{definition}\label{def-tfn}
	
	\begin{enumerate}
		\item We define  $$\operatorname{t}_F(2^e) =\left\{ \begin{array}{clll} 
2^e & \text{when } \operatorname{o}_F(\zeta_{2^e})>2;\\
2 & \text{when }  \operatorname{o}_F(\zeta_{2^e})=2;\\
1 & \text{when } \operatorname{o}_F(\zeta_{2^e})=1.
\end{array} \right.$$ 
		
	\item We define 	$$\nu_{2^\infty_F}^+=\left\{ \begin{array}{clll} \max \{ k \in \mathbb{N} | \zeta_{t_F(2^k)} +
		\zeta_{t_F(2^k)}^{-1} \in F \} & \text{ when it exists} \\ 
		\infty & \text{ otherwise.}  \end{array} \right.$$
		
		\item  We define 	$$\nu_{2^\infty_F}=\left\{ \begin{array}{clll} \nu_{2^\infty_F}^++1 & \text{when} \ p=2\ \text{ and $F$ has property $\mathcal{C}_2$  (see \cite[Definition 3.13]{marques-quadratic})};  \\ 
			\nu_{2^\infty_F}^+ & \text{ otherwise. }  \end{array} \right.$$ 
		\end{enumerate}
\end{definition}
\begin{remark}
	
	 Under the assumption $[F(\zeta_{2^e}):F]
>2$,  we have $o_F(\zeta_{2^e})>2$ so that  by Definition  \ref{def-tfn}  above $t_F(2^e)=2^e$. Therefore, we obtain $$\nu_{2^\infty_F}^+=\left\{ \begin{array}{clll}\operatorname{max}\{k \in \mathbb{N} |\zeta_{2^k}+\zeta_{2^k}^{-1}\in F\} & \text{ when it exists }; \\
	\infty & \text{ otherwise.} \end{array} \right.$$
\end{remark}
In the remaining part of the paper, we will assume that $\nu_{2^\infty_F}<\infty$ and we will often make the assumption that $e>\nu_{2^\infty_F}$. This assumption is equivalent to considering the field extension $F(\zeta_{2^e})/F$ has a degree at least $4$, as proven in the following Lemma.
\begin{lemma}\label{lem}
Let $\nu_{2^\infty_F}<\infty$. Then $e>\nu_{2^\infty_F}$ if and only if $[F(\zeta_{2^{e}}):F]\geq 4$. In particular, $\nu_{2^\infty_F}+1$ is the smallest integer $e$ such that $[F(\zeta_{2^{e}}):F]= 4$. 
\end{lemma}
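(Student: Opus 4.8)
The plan is to derive the lemma from the single equivalence
\[
[F(\zeta_{2^m}):F]\le 2 \iff m\le \nu_{2^\infty_F}, \qquad m\in\mathbb{N}.
\]
This suffices: the degree $[F(\zeta_{2^m}):F]$ is always a power of $2$ (the Galois group embeds in $U_{2^m}$), so the condition ``$[F(\zeta_{2^m}):F]\ge 4$'' is exactly the negation of ``$[F(\zeta_{2^m}):F]\le 2$'', and the equivalence then gives $e>\nu_{2^\infty_F}\iff[F(\zeta_{2^{e}}):F]\ge 4$. For the final assertion: if $e\le\nu_{2^\infty_F}$ then $[F(\zeta_{2^{e}}):F]\le 2<4$, whereas for $e=\nu_{2^\infty_F}+1$ the equivalence gives $[F(\zeta_{2^{e}}):F]\ge 4$, and the elementary bound $[F(\zeta_{2^{m}}):F]\le 2\,[F(\zeta_{2^{m-1}}):F]$ proved below (applied with $m=\nu_{2^\infty_F}+1$, together with $[F(\zeta_{2^{\nu_{2^\infty_F}}}):F]\le 2$) forces $[F(\zeta_{2^{\nu_{2^\infty_F}+1}}):F]=4$; since no $e\le\nu_{2^\infty_F}$ has degree $4$, this makes $\nu_{2^\infty_F}+1$ the least such $e$.

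To prove the equivalence I would first record two facts about the tower of $2$-power cyclotomic fields. For $m\ge 1$, $\zeta_{2^{m-1}}=\zeta_{2^m}^{2}$, so $F(\zeta_{2^{m-1}})\subseteq F(\zeta_{2^m})$, and $\zeta_{2^m}$ is a root of $x^{2}-\zeta_{2^{m-1}}$ over $F(\zeta_{2^{m-1}})$, whence $[F(\zeta_{2^m}):F(\zeta_{2^{m-1}})]\le 2$; consequently $[F(\zeta_{2^{m-1}}):F]$ divides $[F(\zeta_{2^{m}}):F]$ and $[F(\zeta_{2^{m}}):F]\le 2\,[F(\zeta_{2^{m-1}}):F]$. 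In particular $S:=\{m\in\mathbb{N}\mid[F(\zeta_{2^m}):F]\le 2\}$ contains $0$ and is an initial segment of $\mathbb{N}$. The key step is to identify $S$ with the index set defining $\nu_{2^\infty_F}^{+}$ in Definition~\ref{def-tfn}: if $m\in S$ then $o_F(\zeta_{2^m})\le 2$, so $t_F(2^m)\in\{1,2\}$ and $\zeta_{t_F(2^m)}\in\{1,-1\}$, whence $\zeta_{t_F(2^m)}+\zeta_{t_F(2^m)}^{-1}\in\{2,-2\}\subseteq F$; conversely if $m\notin S$ then $t_F(2^m)=2^m$ and $\zeta_{2^m}+\zeta_{2^m}^{-1}\notin F$, because $\zeta_{2^m}+\zeta_{2^m}^{-1}\in F$ would make $\zeta_{2^m}$ a root of $x^{2}-(\zeta_{2^m}+\zeta_{2^m}^{-1})x+1\in F[x]$ of degree $\le 2$. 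Hence $S=\{m\mid\zeta_{t_F(2^m)}+\zeta_{t_F(2^m)}^{-1}\in F\}$, and since $\nu_{2^\infty_F}<\infty$ forces $\nu_{2^\infty_F}^{+}<\infty$, this set has maximum $\nu_{2^\infty_F}^{+}$; being an initial segment, $S=\{0,1,\dots,\nu_{2^\infty_F}^{+}\}$.

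It remains to observe that $\nu_{2^\infty_F}=\nu_{2^\infty_F}^{+}$ under our hypotheses, so that the last description of $S$ reads $[F(\zeta_{2^m}):F]\le 2\iff m\le\nu_{2^\infty_F}$. The correction in Definition~\ref{def-tfn} is active only when $\wp=2$; but in characteristic $2$ every $\zeta_{t_F(2^k)}+\zeta_{t_F(2^k)}^{-1}$ equals $2=0\in F$, which would give $\nu_{2^\infty_F}^{+}=\infty$, contradicting $\nu_{2^\infty_F}<\infty$. I do not expect a genuine difficulty in this argument; the only point that needs care is unwinding the definition of $t_F$ correctly when identifying $S$, together with the observation that the $\mathcal{C}_2$-correction in $\nu_{2^\infty_F}$ is inactive in the present setting.
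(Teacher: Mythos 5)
Your reduction of the lemma to the equivalence $[F(\zeta_{2^m}):F]\le 2\iff m\le\nu_{2^\infty_F}$, together with the observations that these degrees are powers of $2$ and that $S=\{m\in\mathbb{N}\mid [F(\zeta_{2^m}):F]\le 2\}$ is an initial segment, is a reasonable skeleton, but the two steps identifying $S$ are wrong, and they fail exactly in the case the definition of $\nu_{2^\infty_F}$ is designed to capture. First, the implication ``$m\in S\Rightarrow o_F(\zeta_{2^m})\le 2$'' is false: for $F=\mathbb{Q}(\sqrt{2})$ and $m=3$ one has $[F(\zeta_8):F]=2$ but $\zeta_4\notin F$, so $o_F(\zeta_8)=4$; by \cite[Lemma 2.9]{marques-quadratic} a quadratic $2$-power cyclotomic extension need not come from $x^2-\zeta_{2^{m-1}}$, it can also have minimal polynomial $x^2-(\zeta_{2^m}+\zeta_{2^m}^{-1})x+1$ or $x^2-(\zeta_{2^m}-\zeta_{2^m}^{-1})x-1$. (Your inclusion $\{m\mid \zeta_{t_F(2^m)}+\zeta_{t_F(2^m)}^{-1}\in F\}\subseteq S$ is fine; it is the reverse inclusion that breaks.) Second, and fatally, property $\mathcal{C}_2$ is not a characteristic-$2$ phenomenon: the ``$p=2$'' in Definition \ref{def-tfn} refers to the prime $2$ of the $2$-power tower (the thesis treats general $p$), and \cite[Definition 3.13]{marques-quadratic} requires $\zeta_{2^k}\notin F$, $o_F(\zeta_{2^k})=2^{k-1}$ and $\zeta_{2^k}-\zeta_{2^k}^{-1}\in F$, which occurs in characteristic $0$. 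For $F=\mathbb{Q}(\zeta_8-\zeta_8^{-1})=\mathbb{Q}(\sqrt{-2})$ we have $\zeta_8+\zeta_8^{-1}=\sqrt{2}\notin F$, hence $\nu_{2^\infty_F}^{+}=2$, yet $[F(\zeta_8):F]=2$, so $3\in S$: both your equality $S=\{m\mid \zeta_{t_F(2^m)}+\zeta_{t_F(2^m)}^{-1}\in F\}$ and your claim $\nu_{2^\infty_F}=\nu_{2^\infty_F}^{+}$ fail here, since $\nu_{2^\infty_F}=3$. Indeed, under your reading the lemma itself would be false for this $F$ with $e=3$: you would get $e>\nu_{2^\infty_F}=2$ while $[F(\zeta_{8}):F]=2<4$.

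To repair the argument you must fold in the third (``minus'') type of quadratic subextension, i.e.\ prove $\max S=\nu_{2^\infty_F}$ with the $\mathcal{C}_2$ correction included. This is essentially what the paper does: it quotes \cite[Theorem 3.24]{marques-quadratic} to get $[F(\zeta_{2^{\nu_{2^\infty_F}}}):F]=2$, then shows $\zeta_{2^{\nu_{2^\infty_F}+1}}\notin F(\zeta_{2^{\nu_{2^\infty_F}}})$ because otherwise either $\zeta_{2^{\nu_{2^\infty_F}}}\in F$ or $\zeta_{2^{\nu_{2^\infty_F}+1}}-\zeta_{2^{\nu_{2^\infty_F}+1}}^{-1}\in F$, both contradicting the (corrected) definition of $\nu_{2^\infty_F}$; hence the degree at level $\nu_{2^\infty_F}+1$ is exactly $4$, and monotonicity of the degrees along the tower finishes, just as in your last step.
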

\begin{proof}
Suppose $e>\nu_{2^\infty_F}$. By \cite[Theorem 3.24]{marques-quadratic}, we have $[F(\zeta_{2^{\nu_{2^\infty_F}}}):F]=2$. Moreover, $[F(\zeta_{2^{\nu_{2^\infty_F}+1}}):F(\zeta_{2^{\nu_{2^\infty_F}}})]=2$ since $\zeta_{2^{\nu_{2^\infty_F}+1}}\notin F(\zeta_{2^{\nu_{2^\infty_F}}})$, as otherwise either $\zeta_{2^{\nu_{2^\infty_F}}}\in F$ or $\tq{2^{\nu_{2^\infty_F}+1}}\in F$, contradicting the definition of $\nu_{2^\infty_F}$. Additionally, $\zeta_{2^{\nu_{2^\infty_F}+1}}$ is a root of the polynomial $x^2-\zeta_{2^{\nu_{2^\infty_F}}}$ over $F(\zeta_{2^{\nu_{2^\infty_F}}})$. Therefore, we get $[F(\zeta_{2^{\nu_{2^\infty_F}+1}}):F]=4$ by the multiplicativity of the degree of a tower of extensions. As a result, $[F(\zeta_{2^{e}}):F]\geq 4$ since $e\geq \nu_{2^\infty_F}+1$. The converse can be obtained easily with a simple contrapositive. 
\end{proof}

\section{Cyclic \texorpdfstring{$2$}{Lg}-power cyclotomic extensions }

In this section, we characterize when the extension \( F(\zeta_{2^e})/F \) is cyclic and when it is not, by establishing necessary and sufficient conditions on the base field \( F \). Additionally, we describe its subextensions of codegree $2$, all its 2-tower decompositions, and the degrees of these extensions based on the cyclicity property. We present the main theorem for a cyclic 2-power cyclotomic extension in the following result.

		\begin{theorem}\label{maincyclic}
		Suppose that $e>\nu_{2^\infty_F}$. 
			The following assertions are equivalent:
			\begin{enumerate}
				\item $\operatorname{Gal}(F(\zeta_{2^e})/F)$ is a cyclic group.
				\item $\operatorname{Gal}(F(\zeta_{2^e})/F)=\{\sigma_{j}|[j]\in H\}$  where either  $H\leq {\langle [5]\rangle}_{U_{2^e}}$ or $H\leq {\langle [(-5)]\rangle}_{U_{2^e}}.$
				\item $F(\zeta_{2^e})/F$ admits a unique codegree $2$ subextension $F(\zeta_{2^{e-1}})/F$.
				\item $F(\zeta_{2^e})=F(\ts{2^e})=F(\tq{2^e})$.
				\item $F(\zeta_{2^e})/F$ admits a unique $2$-tower decomposition. 
				\begin{center} $F(\zeta_{2^e})/F(\zeta_{2^{e-1}})/\cdots/F(\zeta_{\scalebox{0.7}{$2^{\nu_{2^\infty_F}}$}})/F.$ \end{center} 
			 	\item $\zeta_4\in F$ or $\zeta_4\notin F$ and $\tq{2^{\scalebox{0.7}{$\! \nu_{\scalebox{0.7}{$2^\infty_F$}}$}}} \in F$.
			\end{enumerate}
				Moreover, under the cyclicity assumption, we also $F(\zeta_{2^{\scalebox{0.7}{$\nu_{2^\infty_F}$}}})= F(\ts{2^{\scalebox{0.5}{$\nu_{2^\infty_F}$}}})$.
		\end{theorem}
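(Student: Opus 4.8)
The plan is to prove the chain of equivalences $(1) \Leftrightarrow (2) \Leftrightarrow (3) \Leftrightarrow (4) \Leftrightarrow (5) \Leftrightarrow (6)$, and then to deduce the final displayed assertion as a corollary of the cyclicity. The backbone of the argument is the standard identification $\operatorname{Gal}(F(\zeta_{2^e})/F) \cong H$ for some subgroup $H \le U_{2^e}$ via $\sigma_j \mapsto [j]$, together with the well-known structure $U_{2^e} = \langle [-1] \rangle \times \langle [5] \rangle \cong \mathbb{Z}/2\mathbb{Z} \times \mathbb{Z}/2^{e-2}\mathbb{Z}$ for $e \ge 3$ (and here $e > \nu_{2^\infty_F} \ge 2$ forces $e \ge 3$ by Lemma \ref{lem}). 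I would organize things so that the heavy lifting is concentrated in $(1) \Leftrightarrow (2)$ and $(6) \Leftrightarrow (2)$, with the remaining equivalences following from Galois correspondence bookkeeping.

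First I would establish $(1) \Leftrightarrow (2)$: a subgroup $H \le U_{2^e}$ is cyclic iff it is contained in one of the maximal cyclic subgroups of $U_{2^e}$. Since $U_{2^e}/\{\pm 1\}$-type analysis shows the cyclic subgroups of maximal order are exactly $\langle [5] \rangle$ and $\langle [-5] \rangle$ (both of order $2^{e-2}$, and these are the only cyclic subgroups of index $2$; any cyclic $H$ of smaller order still embeds in one of them because $U_{2^e} \setminus (\langle[5]\rangle \cup \langle[-5]\rangle)$ consists only of elements of order $2$), one gets that $\operatorname{Gal}$ is cyclic iff $H$ lies inside $\langle[5]\rangle$ or $\langle[-5]\rangle$. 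This is a purely group-theoretic lemma that I would either cite or prove in a couple of lines. Next, $(2) \Leftrightarrow (3)$: the codegree $2$ subextensions of $F(\zeta_{2^e})/F$ correspond to the index $2$ subgroups of $H$; a finite abelian group has a unique subgroup of index $2$ iff it is cyclic (of even order) — hence uniqueness of the codegree $2$ subextension is equivalent to cyclicity, and one identifies the unique such subextension with $F(\zeta_{2^{e-1}})/F$ because $\operatorname{Gal}(F(\zeta_{2^e})/F(\zeta_{2^{e-1}}))$ is the kernel of reduction mod $2^{e-1}$. For $(3) \Leftrightarrow (4)$ I would use that $F(\zeta_{2^e})/F(\tr{2^e})$ always has degree $1$ or $2$ (since $\zeta_{2^e}$ satisfies $x^2 \mp \tr{2^e} x + 1$ over $F(\tr{2^e})$), so $F(\ts{2^e})$ and $F(\tq{2^e})$ are candidate codegree $\le 2$ subextensions; they coincide with $F(\zeta_{2^e})$ precisely when there is no room for a second codegree $2$ subextension, which is the content of (3)–(4) matching up. The equivalence $(3) \Leftrightarrow (5)$ is an induction on towers: a $2$-tower decomposition is a maximal chain in the subextension lattice, and uniqueness of the whole chain is equivalent to uniqueness at the top step plus (inductively, descending to $F(\zeta_{2^{e-1}})/F$, whose relevant constant is the same $\nu_{2^\infty_F}$) uniqueness all the way down; the base of the induction is $F(\zeta_{2^{\nu_{2^\infty_F}}})/F$, which has degree $\le 2$ by \cite[Theorem 3.24]{marques-quadratic}.

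The main obstacle — and the genuinely new content — is $(6) \Leftrightarrow (2)$ (equivalently $(6) \Leftrightarrow (4)$), the translation into a condition on the base field. Here I would split on whether $\zeta_4 \in F$. If $\zeta_4 \in F$, then $\sigma_{-1}$ is trivial on $F(\zeta_{2^e})$ (it fixes $\zeta_4$, and more to the point the complex conjugation $[-1]$ is already realized inside... ) so $H \le \langle [5] \rangle$ automatically and $(2)$ holds; conversely this case always gives cyclicity. If $\zeta_4 \notin F$, I must show $H \le \langle[5]\rangle \cup \langle[-5]\rangle$ iff $\tq{2^{\nu_{2^\infty_F}}} \in F$. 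The key computation is that $\zeta_{2^{\nu_{2^\infty_F}}} - \zeta_{2^{\nu_{2^\infty_F}}}^{-1} \in F$ says exactly that the generator of $\operatorname{Gal}$ coming from the "bottom" quadratic step acts on $\zeta_{2^{\nu_{2^\infty_F}+1}}$ (a square root of $\zeta_{2^{\nu_{2^\infty_F}}}$) in a way compatible with lying in $\langle[-5]\rangle$ rather than generating the full $\langle[-1]\rangle \times \langle[5]\rangle$; more precisely $[-5]$ is the element of $U_{2^e}$ that fixes $\zeta_{2^e} - \zeta_{2^e}^{-1}$ up to sign consistent with minus-inverse, and tracking which coset of $\langle[5]\rangle$ contains the image of the non-cyclotomic part of $H$ amounts to checking whether $\tq{}$ of the appropriate level is in $F$. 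I would make this precise using the structure results of \cite{marques-quadratic} (property $\mathcal{C}_2$ and Theorem 3.24 there), since $\nu_{2^\infty_F}$ with the $+1$ correction is exactly designed so that $F(\zeta_{2^{\nu_{2^\infty_F}}})$ is the largest quadratic-or-trivial cyclotomic layer. Finally, for the "moreover" clause: under cyclicity with $\zeta_4 \notin F$ we have by (4) that $F(\zeta_{2^{\nu_{2^\infty_F}}}) = F(\tq{2^{\nu_{2^\infty_F}}})$, and since this extension has degree $2$ while $F(\ts{2^{\nu_{2^\infty_F}}})/F$ has degree dividing $2$ and $F(\ts{2^{\nu_{2^\infty_F}}}) \subseteq F(\zeta_{2^{\nu_{2^\infty_F}}})$, equality $F(\zeta_{2^{\nu_{2^\infty_F}}}) = F(\ts{2^{\nu_{2^\infty_F}}})$ will follow once I rule out $\ts{2^{\nu_{2^\infty_F}}} \in F$, which is immediate from the definition of $\nu_{2^\infty_F}^+$ as a maximum (if $\ts{2^{\nu_{2^\infty_F}}} \in F$ we would contradict either maximality or, in the property-$\mathcal{C}_2$ case, the $+1$ shift); when $\zeta_4 \in F$ all three fields equal $F(\zeta_{2^{\nu_{2^\infty_F}}})$ trivially. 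I expect the delicate point throughout to be keeping the two conventions for $\nu_{2^\infty_F}$ (with and without the property-$\mathcal{C}_2$ shift) straight, since the "moreover" statement and step $(6)$ are exactly where that shift does real work.
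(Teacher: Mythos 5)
Your plan follows the same overall architecture as the paper (a cycle of equivalences, with the substance concentrated in the embedding of the Galois group into $U_{2^e}$ and in the base-field dichotomy of item (6)), but two of the steps it rests on are genuinely defective, not merely terse.

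First, the group-theoretic lemma you invoke for $(1)\Rightarrow(2)$ is false once $e\geq 5$. The complement $U_{2^e}\setminus(\langle[5]\rangle\cup\langle[-5]\rangle)$ consists of the classes $[-5^{2m}]$, and these need not have order $2$: in $U_{32}$ the class $[-5^{2}]=[7]$ has order $4$ (since $7^2\equiv 17$ and $17^2\equiv 1\pmod{32}$), and $\langle[7]\rangle=\{[1],[7],[17],[23]\}$ is a cyclic subgroup of order $4$ contained in neither $\langle[5]\rangle$ (whose elements are $\equiv 1\pmod 4$) nor $\langle[-5]\rangle$ (whose elements are $\equiv 1,3\pmod 8$). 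Moreover, this situation is realized under the hypotheses of Theorem~\ref{maincyclic}: for $F=\mathbb{Q}(\zeta_{16}-\zeta_{16}^{-1})$ and $e=5$ one has $\operatorname{Gal}(F(\zeta_{32})/F)=\{\sigma_j : j\equiv 1,7 \pmod{16}\}=\langle\sigma_7\rangle$, cyclic of order $4$, while $[F(\zeta_{16}):F]=2$ and $[F(\zeta_{32}):F]=4$, so that $\nu_{2^\infty_F}=4<e$ by Lemma~\ref{lem}. Hence cyclicity, even together with $e>\nu_{2^\infty_F}$, cannot be converted into the containment asserted in (2) by the purely group-theoretic argument you propose; this equivalence is exactly where extra care is required (the paper's own one-line proof of $(1)\Rightarrow(2)$ is silent on this point as well), and your route through the claimed lemma cannot be repaired as stated.

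Second, the implication $(6)\Rightarrow(1)$ in the case $\zeta_4\notin F$ and $\zeta_{2^{\nu_{2^\infty_F}}}-\zeta_{2^{\nu_{2^\infty_F}}}^{-1}\in F$ --- which you yourself identify as the genuinely new content --- is not actually proved in your sketch. Your ``key computation'' misidentifies the relevant automorphism: $\sigma_{-5}$ does not fix $\zeta_{2^e}-\zeta_{2^e}^{-1}$ (it sends it to $-(\zeta_{2^e}^{5}-\zeta_{2^e}^{-5})$); the involution fixing $\zeta_{2^e}-\zeta_{2^e}^{-1}$ is $\sigma_{-5^{2^{e-3}}}=\sigma_{2^{e-1}-1}$ (cf.\ Lemma~\ref{codegree2}). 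More importantly, you give no mechanism for transporting the hypothesis at level $\nu_{2^\infty_F}$ up to level $e$. The paper does this by induction on $[F(\zeta_{2^e}):F]$: if $F(\zeta_{2^e}+\zeta_{2^e}^{-1})$ or $F(\zeta_{2^e}-\zeta_{2^e}^{-1})$ were a proper codegree-$2$ subextension, the relations $(\zeta_{2^e}\pm\zeta_{2^e}^{-1})^2=(\zeta_{2^{e-1}}+\zeta_{2^{e-1}}^{-1})\pm 2$ drive the contradiction down the tower to $\zeta_{2^{\nu_{2^\infty_F}}}+\zeta_{2^{\nu_{2^\infty_F}}}^{-1}\in F$, which is impossible because the nontrivial element of $\operatorname{Gal}(F(\zeta_{2^{\nu_{2^\infty_F}}})/F)$ negates that element; nothing equivalent appears in your outline. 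The converse direction (cyclic $\Rightarrow$ (6)), which your ``iff'' also needs, is likewise only gestured at; the paper extracts it from the uniqueness of the $2$-tower by first proving $F(\zeta_{2^{\nu_{2^\infty_F}}})=F(\zeta_{2^{\nu_{2^\infty_F}}}+\zeta_{2^{\nu_{2^\infty_F}}}^{-1})$ and then applying \cite[Lemma 2.9]{marques-quadratic} and \cite[Theorem 3.24]{marques-quadratic}. Two smaller corrections: codegree-$2$ subextensions correspond to order-$2$ (not index-$2$) subgroups of the Galois group, and in the ``moreover'' clause item (4) does not give $F(\zeta_{2^{\nu_{2^\infty_F}}})=F(\zeta_{2^{\nu_{2^\infty_F}}}-\zeta_{2^{\nu_{2^\infty_F}}}^{-1})$ (when that difference lies in $F$ the right-hand field is $F$ itself); the exclusion of $\zeta_{2^{\nu_{2^\infty_F}}}+\zeta_{2^{\nu_{2^\infty_F}}}^{-1}\in F$ is also not automatic from maximality of $\nu^+_{2^\infty_F}$ --- outside the property-$\mathcal{C}_2$ case that membership holds by definition --- so one must first show that the cyclic case with $\zeta_4\notin F$ forces the $\mathcal{C}_2$ shift, or argue via the uniqueness of the tower as the paper does.
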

		
	\begin{proof}
	$(1) \Longleftrightarrow (2)$	We suppose $\operatorname{Gal}(F(\zeta_{2^e})/F)$ is cyclic. Since by Lemma \ref{lem} $[F(\zeta_{2^e}):F]>2$, and by leveraging \cite[Theorem 2.3]{conradcyclotomic} along with \cite[Lemma 2.1]{conradcyclotomic}, we establish that $\operatorname{Gal}(F(\zeta_{2^e})/F)$ is of order strictly greater than $2$, and isomorphic to a cyclic subgroup of $U_{2^e}$. Thus, the result follows. The converse is trivial.  \\

		$(2) \Longleftrightarrow (3)$  We suppose \(\operatorname{Gal}(F(\zeta_{2^e})/F) = \{\sigma_{j} \mid [j] \in H\}\) where \(H \leq \langle [5] \rangle_{U_{2^e}}\). Similar arguments may be used to prove the case when \(H \leq \langle [-5] \rangle_{U_{2^e}}\).

Since \(H \leq \langle [5] \rangle_{U_{2^e}}\), \([5^{2^{e-3}}]\) is the unique element of order 2 in \(U_{2^e}\), as \(5^{2^{e-3}} \equiv 2^{e-1} + 1 \mod 2^e\) and \(H\) is cyclic. Consequently, \(\sigma_{5^{2^{e-3}}}(\zeta_{2^e}) = \zeta_{2^e}^{2^{e-1} + 1} = -\zeta_{2^e}\) and \(x^2 - \zeta_{2^{e-1}}\) is the minimal polynomial of \(\zeta_{2^e}\) over \(F(\zeta_{2^{e-1}})\). Since \([F(\zeta_{2^e}) : F(\zeta_{2^e})^{\langle \sigma_{5^{2^{e-3}}} \rangle}] = 2\), this implies \(F(\zeta_{2^e})^{\langle \sigma_{5^{2^{e-3}}} \rangle} = F(\zeta_{2^{e-1}})\), as \(F(\zeta_{2^e})/F\) is a cyclic extension. The converse follows by using contradiction and the Galois correspondence.\\

		$(3) \Longrightarrow (4)$ 
			  Follows from
		\cite[Lemma 2.9]{marques-quadratic}, that $\zeta_{2^e}$ is a root of the polynomial either $x^2-\zeta_{2^{e-1}}$, or $x^2-\ts{2^e}x+1$ or $x^2-\tq{2^e}x-1$ and the uniqueness of the minimal polynomial of $\zeta_{2^e}$ over a subextension of codegree $2$.  \\
		
		$(4) \Longrightarrow (3)$  From
		\cite[Lemma 2.9]{marques-quadratic} we know that the minimal polynomial of $\zeta_{2^e}$ over the codegree $2$ extension which exists and is unique by the Galois correspondence is either $x^2-\zeta_{2^{e-1}}$, or $x^2-\ts{2^e}x+1$ or $x^2-\tq{2^e}x-1$, but since $F(\zeta_{2^e})=F(\ts{2^e})=F(\tq{2^e})$ the only possibility is that $x^2-\zeta_{2^{e-1}}$ is the minimal polynomial and $(3)$ is satisfied.\\

		$(3) \Longleftrightarrow (5)$  We assume that  $F(\zeta_{2^e})/F$ admits a unique codegree $2$ subextension $F(\zeta_{2^{e-1}})/F$. We want to prove that $F(\zeta_{2^e})/F$ admits a unique $2$-tower decomposition $$F(\zeta_{2^e})/F(\zeta_{2^{e-1}})/\cdots/F(\zeta_{\scalebox{0.7}{$2^{\nu_{2^\infty_F}}$}})/F.$$
		We will prove the statement by induction on the degree $[F(\zeta_{2^e}):F]$. We start by the base case $[F(\zeta_{2^e}):F]=4$.  Since $[F(\zeta_{2^e}):F(\zeta_{2^{e-1}})]=2$ and $[F(\zeta_{2^e}):F]=4$, then by multiplicativity of the degree of a tower of field extension we have $[F(\zeta_{2^{e-1}}):F]=2$. Moreover, since $F(\zeta_{2^{e-1}})/F$ is unique codegree $2$ subextension and by Lemma \ref{lem} $e=\nu_{2^\infty_F}+1$, then $F(\zeta_{2^e})/F(\zeta_{\scalebox{0.7}{$2^{\nu_{2^\infty_F}}$}})/F$  is the unique $2$-tower decomposition of $F(\zeta_{2^e})/F$ as desired.
The rest of the induction is not hard to derive. Moreover, the converse is trivial. 	\\
			
	 	$ (5) \Longrightarrow (6)$ 		 Suppose that $F(\zeta_{2^e})/F(\zeta_{2^{e-1}})/\cdots/F(\zeta_{2^{\scalebox{0.7}{$\nu_{2^\infty_F}$}}})/F$ is the unique $2$-tower \\ decomposition of $F(\zeta_{2^e})/F$. As established in the proof of $(3) \Longleftrightarrow (4)  \Longleftrightarrow (5)$ and $[F(\zeta_{2^{\scalebox{0.5}{$\nu_{2^\infty_F}+1$}}}):F]=4$ by Lemma \ref{lem}, we have $F(\zeta_{2^{\nu_{2^\infty_F}+1}})/F(\zeta_{2^{\scalebox{0.7}{$\nu_{2^\infty_F}$}}})/F$ as the unique $2$-tower decomposition of the extension $F(\zeta_{2^{\scalebox{0.7}{$\nu_{2^\infty_F}+1$}}})/F$ and $F(\zeta_{2^{\scalebox{0.5}{$\nu_{2^\infty_F}+1$}}})=F(\ts{2^{\scalebox{0.5}{$\nu_{2^\infty_F}+1$}}})=F(\tq{2^{\scalebox{0.7}{$\nu_{2^\infty_F}+1$}}})$. 
		Moreover, we obtain that $[F(\ts{2^{\scalebox{0.5}{$\nu_{2^\infty_F}+1$}}}): F(\ts{2^{\scalebox{0.5}{$\nu_{2^\infty_F}$}}})]=2$, indeed $\ts{2^{\scalebox{0.5}{$\nu_{2^\infty_F}+1$}}}$ satisfy the polynomial $x^2-(\ts{2^{\scalebox{0.5}{$\nu_{2^\infty_F}$}}}+2)$ over $F(\ts{2^{\nu_{2^\infty_F}}})$ and the Galois group of $F(\zeta_{2^{\nu_{2^\infty_F}+1}})/F(\zeta_{2^{\scalebox{0.7}{$\nu_{2^\infty_F}$}}})$ stabilize $\ts{2^{\scalebox{0.5}{$\nu_{2^\infty_F}$}}}$.
		Therefore, by the uniqueness of $2$-tower decomposition, we obtain $F(\zeta_{2^{\scalebox{0.7}{$\nu_{2^\infty_F}$}}})= F(\ts{2^{\scalebox{0.5}{$\nu_{2^\infty_F}$}}})$. It then follows by  \cite[Lemma 2.9]{marques-quadratic} that, $\operatorname{min}(\zeta_{2^{\nu_{2^\infty_F}}}, F)$ is  either $x^2-\zeta_{2^{\nu_{2^\infty_F}}}^2$ or $x^2-(\tq{2^{\nu_{2^\infty_F}}})x-1$.  By \cite[Theorem 3.24]{marques-quadratic}, the first case implies that $\zeta_4 \in F$ and the second case implies that $\zeta_4\notin F$  and  $\tq{2^{\scalebox{0.7}{$\nu_{2^\infty_F}$}}} \in F$. \\

		$(6) \Longrightarrow (1)$  Suppose that $\zeta_4\in F$ or $\zeta_4\notin F$ and $\tq{2^{\scalebox{0.5}{$\nu_{2^\infty_F}$}}} \in F$. 
		
	Suppose that $\zeta_4 \in F$. Consider a non-trivial automorphism $\sigma$ in $\operatorname{Gal}(F(\zeta_{2^e})/F)$. By \cite[Lemma 2.1]{conradcyclotomic}, $\sigma(\zeta_{2^e})=\zeta_{2^e}^{j_\sigma}$ for some $[j_\sigma] \in U_{2^e}$. Since $\zeta_4 = \zeta_{2^e}^{2^{e-2}}\in F$, that is equivalent to have $\zeta_{2^e}^{2^{e-2}j_\sigma} = \zeta_{2^e}^{2^{e-2}}$ which is equivalent to have $j_\sigma \equiv 1 \mod 4$. Additionally, \cite[Theorem 2.3]{conradcyclotomic} shows that $\operatorname{Gal}(F(\zeta_{2^e})/F)$ is isomorphic to a subgroup of $U_{2^e}$. We recall that $U_{2^e}=\{ [\epsilon 5^k] | k \in \llbracket 0, 2^{e-2}-1 \rrbracket, \epsilon \in { \pm 1 } \}$. Thus,  $\epsilon5^k\equiv 1 \mod 4$ is equivalent to $\epsilon=1$, and that is in turn equivalent to have $\operatorname{Gal}(F(\zeta_{2^e})/F)$ is isomorphic to a subgroup of the cyclic group generated by $[5]$. Hence, the Galois group is cyclic.

		It remains to prove that  the condition   $\zeta_4\notin F$ and $\tq{2^{\nu_{2^\infty_F}}} \in F$ implies   $\operatorname{Gal}(F(\zeta_{2^e})/F)$ is a cyclic group, equivalently from the above, that $F(\zeta_{2^e})/F$ admits a unique codegree $2$ subextension $F(\zeta_{2^{e-1}})/F$.
		 We proceed by induction on the degree of $F(\zeta_{2^e})/F$. We start with the base case,  $[F(\zeta_{2^e}):F]=4.$
		 
		 By \cite[Theorem 3.24]{marques-quadratic} we know that $ [F(\zeta_{2^{\scalebox{0.5}{$\nu_{2^\infty_F}$}}}):F]=2$. Also, since $[F(\zeta_{2^e}):F]=4$ then  $e=\nu_{2^\infty_F}+1$ by Lemma \ref{lem}. Thus, by the multiplicativity property of degree of a tower of field extensions we have $[F(\zeta_{2^{\scalebox{0.5}{$\nu_{2^\infty_F}+1$}}}):F(\zeta_{2^{\scalebox{0.5}{$\nu_{2^\infty_F}$}}})]=2$. It is enough to prove that $F(\zeta_{2^{\scalebox{0.5}{$\nu_{2^\infty_F}$}}})/F$ is the only subextension of $F(\zeta_{2^{\scalebox{0.5}{$\nu_{2^\infty_F}+1$}}})/F$ of codegree $2$. We argue by contradiction. By \cite[Lemma 2.9]{marques-quadratic} we know that the possible codegree $2$ subextension of $F(\zeta_{2^{\scalebox{0.5}{$\nu_{2^\infty_F}+1$}}})/F$ are  $F(\zeta_{2^{\scalebox{0.5}{$\nu_{2^\infty_F}$}}})/F$, $F(\ts{2^{\scalebox{0.5}{$\nu_{2^\infty_F}+1$}}})/F$ and $F(\tq{2^{\scalebox{0.5}{$\nu_{2^\infty_F}+1$}}})/F$.  Now we suppose that $F(\zeta_{2^{\scalebox{0.5}{$\nu_{2^\infty_F}$}}})/F$ and    $F(\ts{2^{\scalebox{0.5}{$\nu_{2^\infty_F}+1$}}})/F$ are distinct subextensions of codegree $2$ of $F(\zeta_{2^{\scalebox{0.5}{$\nu_{2^\infty_F}+1$}}})/F$.
		  Since $\ts{2^{\scalebox{0.5}{$\nu_{2^\infty_F}+1$}}}$ is a root of the polynomial $x^2-(\ts{2^{\scalebox{0.5}{$\nu_{2^\infty_F}$}}}+2)$ and  $[F(\ts{2^{\scalebox{0.5}{$\nu_{2^\infty_F}+1$}}}):F]=2$ by the multiplicativity property of degree of a tower of field extensions, then the polynomial $x^2-(\ts{2^{\scalebox{0.5}{$\nu_{2^\infty_F}$}}}+2)$ is irreducible over $F$ and $\ts{2^{\scalebox{0.5}{$\nu_{2^\infty_F}$}}}\in F$. But that is impossible since the non-trivial element in $\operatorname{Gal}(F(\zeta_{2^{\scalebox{0.5}{$\nu_{2^\infty_F}$}}})/F)$ sends $\ts{2^{\scalebox{0.5}{$\nu_{2^\infty_F}$}}}$ to $-\ts{2^{\scalebox{0.5}{$\nu_{2^\infty_F}$}}}$. 
		   As a consequence,  we have proven that $F(\ts{2^{\scalebox{0.5}{$\nu_{2^\infty_F}+1$}}})/F$ is not a subextension of $F(\zeta_{2^{\scalebox{0.5}{$\nu_{2^\infty_F}+1$}}})/F$ with codegree $2$. Using a similar method, we can prove that  $F(\tq{2^{\scalebox{0.5}{$\nu_{2^\infty_F}+1$}}})/F$ is not admissible as a codegree $2$ subextension. 
		 Therefore, $F(\zeta_{2^{\scalebox{0.5}{$\nu_{2^\infty_F}$}}})/F$ is the only subextension of codegree $2$ of $F(\zeta_{2^{\scalebox{0.5}{$\nu_{2^\infty_F}+1$}}})/F$. 
		  
		  Now suppose that the property is true for any $2$-power cyclotomic extension of degree $2^m, m>2$. We will prove that this property remains true for a $2$-power cyclotomic extension $F(\zeta_{2^e})/F$ of degree $2^{m+1}$.
		  
		  By \cite[Lemma 2.9]{marques-quadratic} we know that the possible subextensions of codegree $2$ of $F(\zeta_{2^e})/F$ are $F(\zeta_{2^{e-1}})/F$, $F(\ts{2^e})/F$ and $F(\tq{2^e})/F$. 
		  We will prove that $F(\zeta_{2^{e-1}})/F$ is the only admissible subextension of codegree $2$ of $F(\zeta_{2^{e}})/F$. 
		  Since $m\geq 2$, we know by the induction assumption that $F(\zeta_{2^{e-1}})/F$ admits a unique $2$-tower decomposition and by the equivalences established above, we know that $F(\zeta_{2^{e-1}})/F$ is a cyclic extension and  $F(\zeta_{2^{e-1}})= F(\ts{2^{e-1}})=F(\tq{2^{e-1}})$.  We now argue by contradiction. 
	   Suppose that $F(\zeta_{2^{e-1}})/F$ and $F(\ts{2^{e}})/F$ are distinct codegree $2$ subextensions of $F(\zeta_{2^{e}})/F$. The other cases can be proven similarly. On the one hand, from our contradiction assumption, $[F(\zeta_{2^{e}}):F(\ts{2^{e}})]=2$. On the other hand, from the above we have $F(\zeta_{2^{e-1}})= F(\ts{2^{e-1}})=F(\tq{2^{e-1}})$. Thus,  $[F(\ts{2^{e}}):F(\ts{2^{e-1}})]= 2$,  since $\ts{2^{e}}$ is a root of the polynomial $x^2 - (\ts{2^{e-1}}+2)$ and our contradiction assumption. But then by multiplicativity of the degree in towers, we have $[F(\zeta_{2^{e}}):F(\ts{2^{e-1}})]=4$ contradicting that this degree should be $2$ given that $F(\zeta_{2^{e-1}})= F(\ts{2^{e-1}})=F(\tq{2^{e-1}})$.
		Therefore, we can conclude the induction. The remaining part of the proof is obtained in the arguments above.

	\end{proof}

	Based on the proof of Theorem \ref{maincyclic} above, we deduce the following corollary. 
	\begin{corollary}
		Suppose that $e>\nu_{2^\infty_F}$, then
		\begin{itemize}
			\item  $\operatorname{Gal}(F(\zeta_{2^e})/F)$ is isomorphic to a subgroup of the cyclic group  $ {\langle [5]\rangle}_{U_{2^e}}$ if and only if  $\zeta_4\in F$.
			\item $\operatorname{Gal}(F(\zeta_{2^e})/F)$ is isomorphic to a subgroup of the cyclic group $ {\langle [-5]\rangle}_{U_{2^e}}$ if and only if $\zeta_4\notin F$ and $\tq{2^{\scalebox{0.7}{$\! \nu_{\scalebox{0.7}{$2^\infty_F$}}$}}} \in F$.
		\end{itemize} 
	\end{corollary}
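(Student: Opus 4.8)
The corollary is a direct harvest of the equivalences $(1)\Longleftrightarrow(2)\Longleftrightarrow(6)$ already established in Theorem \ref{maincyclic}, together with the refinement of the proof of $(6)\Longrightarrow(1)$, so my plan is to trace exactly which branch of that argument produces which generator. First I would note that if $\operatorname{Gal}(F(\zeta_{2^e})/F)$ embeds in $\langle[5]\rangle_{U_{2^e}}$ (respectively $\langle[-5]\rangle_{U_{2^e}}$), then in particular it is cyclic, so by Theorem \ref{maincyclic} condition $(6)$ holds: $\zeta_4\in F$, or $\zeta_4\notin F$ and $\tq{2^{\nu_{2^\infty_F}}}\in F$. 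The task is therefore to decide, given cyclicity, which of these two mutually exclusive alternatives corresponds to which cyclic subgroup. The key computational fact, already used in the proof, is that $\langle[5]\rangle_{U_{2^e}}$ consists exactly of the classes $[j]$ with $j\equiv 1\bmod 4$, while $\langle[-5]\rangle_{U_{2^e}}$ consists of the classes $[j]$ with $j\equiv \pm 1\bmod 4$ but not all congruent to $1$ — more precisely $-5\equiv 3\bmod 4$, so $\langle[-5]\rangle$ contains an element $\equiv 3\bmod 4$, whereas $\langle[5]\rangle$ does not.

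The forward direction for the first bullet is then: if $\operatorname{Gal}(F(\zeta_{2^e})/F)\hookrightarrow\langle[5]\rangle_{U_{2^e}}$, every automorphism $\sigma$ satisfies $j_\sigma\equiv 1\bmod 4$, hence fixes $\zeta_4=\zeta_{2^e}^{2^{e-2}}$ (since $\zeta_{2^e}^{2^{e-2}j_\sigma}=\zeta_{2^e}^{2^{e-2}}$ exactly when $j_\sigma\equiv 1\bmod 4$), so $\zeta_4\in F$. Conversely, the implication $\zeta_4\in F\Rightarrow\operatorname{Gal}(F(\zeta_{2^e})/F)\hookrightarrow\langle[5]\rangle_{U_{2^e}}$ is precisely the first paragraph of the proof of $(6)\Longrightarrow(1)$ — I would simply cite it. This settles the first bullet.

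For the second bullet I would argue by elimination using the dichotomy in $(6)$ and the fact that $\zeta_4\in F$ and ``$\zeta_4\notin F$ and $\tq{2^{\nu_{2^\infty_F}}}\in F$'' cannot both hold. If $\operatorname{Gal}(F(\zeta_{2^e})/F)\hookrightarrow\langle[-5]\rangle_{U_{2^e}}$, then the group is cyclic, so $(6)$ holds; but it cannot be the case that $\zeta_4\in F$, for then by the first bullet the Galois group would also embed in $\langle[5]\rangle_{U_{2^e}}$, and since $[F(\zeta_{2^e}):F]>2$ (Lemma \ref{lem}) the group has order $>2$, forcing it to contain an element $[j]$ with $j\equiv 3\bmod 4$ (as $\langle[-5]\rangle$ is generated by such an element and a group of order $>2$ inside it is not contained in the index-$2$ subgroup $\langle[25]\rangle=\langle[5]\rangle\cap\langle[-5]\rangle$) — contradiction with $j\equiv 1\bmod 4$ for all $j$. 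Hence $\zeta_4\notin F$ and $\tq{2^{\nu_{2^\infty_F}}}\in F$. Conversely, the implication $(\zeta_4\notin F$ and $\tq{2^{\nu_{2^\infty_F}}}\in F)\Rightarrow\operatorname{Gal}(F(\zeta_{2^e})/F)\hookrightarrow\langle[-5]\rangle_{U_{2^e}}$ follows from the second half of the proof of $(6)\Longrightarrow(1)$: that argument shows the extension is cyclic, hence by $(1)\Longleftrightarrow(2)$ the Galois group lies in $\langle[5]\rangle_{U_{2^e}}$ or $\langle[-5]\rangle_{U_{2^e}}$, and the first option is excluded because $\zeta_4\notin F$ by the first bullet.

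The only real subtlety — and the step I would treat most carefully — is the elementary group theory of $U_{2^e}\cong\mathbb{Z}/2\times\mathbb{Z}/2^{e-2}$: namely that $\langle[5]\rangle$ and $\langle[-5]\rangle$ are distinct cyclic subgroups of order $2^{e-2}$, that their intersection is the unique index-$2$ subgroup $\langle[25]\rangle$ of $\langle[5]\rangle$, and that consequently a cyclic subgroup of order $>2$ is contained in at most one of them. Once this is pinned down, everything else is a matter of citing the already-proven pieces of Theorem \ref{maincyclic}, so I expect no genuine obstacle beyond bookkeeping.
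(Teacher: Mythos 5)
Your first bullet (both directions) and the backward direction of your second bullet are correct and follow exactly the route the paper intends: harvest $(1)\Leftrightarrow(2)\Leftrightarrow(6)$ of Theorem \ref{maincyclic} and quote the first paragraph of the proof of $(6)\Rightarrow(1)$. The genuine gap is at the step you yourself flagged as the ``only real subtlety''. Your claim that, since $\langle[5]\rangle\cap\langle[-5]\rangle=\langle[25]\rangle$ has index $2$ in $\langle[5]\rangle$, a cyclic subgroup of order $>2$ of $U_{2^e}$ can lie in at most one of $\langle[5]\rangle$, $\langle[-5]\rangle$, is false as soon as $e\geq 5$: the subgroup $\langle[25]\rangle$ itself has order $2^{e-3}>2$ and lies in both, and more generally every subgroup of $\langle[\pm5]\rangle$ of order at most $2^{e-3}$ is contained in $\langle[25]\rangle$, hence in both. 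Under the hypothesis $e>\nu_{2^\infty_F}$ the Galois group is only guaranteed order $\geq 4$, and its order $2^{e-\nu_{2^\infty_F}+1}$ is $\leq 2^{e-3}$ whenever $\nu_{2^\infty_F}\geq 4$, so your elimination argument for the forward direction of the second bullet breaks down precisely in that range. Concretely, take $F=\mathbb{Q}(\zeta_{16})$ and $e=6$ (here $\nu_{2^\infty_F}=5$, so $e>\nu_{2^\infty_F}$): the image of $\operatorname{Gal}(F(\zeta_{64})/F)$ in $U_{64}$ is $\{[1],[17],[33],[49]\}=\langle[(-5)^{4}]\rangle\subseteq\langle[-5]\rangle$, even though $\zeta_4\in F$.

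The same example shows that the implication ``image contained in $\langle[-5]\rangle$ implies $\zeta_4\notin F$'' cannot be rescued by a cleverer group-theoretic lemma: with the literal containment reading it is simply not true, so no proof exists along those lines. What the proof of Theorem \ref{maincyclic} actually delivers, and what the corollary has to be read as asserting, is the exclusive dichotomy of condition $(2)$: when $\zeta_4\in F$ the image lies in $\langle[5]\rangle$, while when $\zeta_4\notin F$ and $\tq{2^{\nu_{2^\infty_F}}}\in F$ the image lies in $\langle[-5]\rangle$ \emph{but not} in $\langle[5]\rangle$, since some $j_\sigma\equiv 3\bmod 4$ exactly because $\zeta_4$ is moved. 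If you add that exclusion to the hypothesis of your forward direction, the argument becomes immediate and your faulty lemma is not needed: some $j_\sigma\not\equiv 1\bmod 4$ gives $\zeta_4\notin F$ directly, cyclicity gives condition $(6)$, and the alternative $\zeta_4\in F$ is ruled out, so $\tq{2^{\nu_{2^\infty_F}}}\in F$. As written, however, your proof of that direction rests on a false statement about subgroups of $U_{2^e}$.
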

	We can also deduce from Theorem \ref{maincyclic} an expression of the minimal polynomial when the 2-power cyclotomic extension is cyclic.
 \begin{corollary}
 	Let $e>\nu_{2^\infty_F}$. If $\operatorname{Gal}(F(\zeta_{2^e})/F)$ is a cyclic group, then $[F(\zeta_{2^e}):F]=2^{\scalebox{0.7}{$e-(\nu_{2^\infty_F}-1)$}}$ and $\operatorname{min}(\zeta_{2^{e-i}}, F(\zeta_{2^{e-(i+k)}}))=x^{2^k}-\zeta_{2^{e-(i+k)}}$ for all $i\in \llbracket0, e-\nu_{2^\infty_F}\rrbracket$ and $k\in \llbracket 0, e-\nu_{2^\infty_F}-i \rrbracket.$ Moreover, 
			\begin{itemize}
				\item when $\zeta_4\in F$,  $\operatorname{min}(\zeta_{2^{e-i}}, F)=x^{2^{\scalebox{0.7}{$e-(\nu_{2^\infty_F}+i-1)$}}}-\zeta_{\scalebox{0.5}{$2^{\nu_{2^\infty_F}-1}$}}$ for all $i\in \llbracket0, e-(\nu_{2^\infty_F}-1)\rrbracket$. In particular, $\operatorname{min}(\zeta_{2^{e}}, F)=x^{2^{\scalebox{0.5}{$e-(\nu_{2^\infty_F}-1)$}}}-\zeta_{\scalebox{0.5}{$2^{\nu_{2^\infty_F}-1}$}}$, and
				\item when $\zeta_4\notin F$ and  $\tq{2^{\scalebox{0.5}{$\nu_{2^\infty_F}$}}} \in F$, $\operatorname{min}(\zeta_{2^{e-i}}, F)=x^{2^{\scalebox{0.7}{$e-(\nu_{2^\infty_F}+i-1)$}}}-\tq{2^{\scalebox{0.5}{$\nu_{2^\infty_F}$}}}x^{2^{\scalebox{0.7}{$e-(\nu_{2^\infty_F}+i)$}}}-1$ for all $i\in \llbracket0, e-\nu_{2^\infty_F}\rrbracket$. In particular, $\operatorname{min}(\zeta_{2^{e}}, F)=x^{2^{\scalebox{0.7}{$e-(\nu_{2^\infty_F}-1)$}}}-\tq{2^{\scalebox{0.5}{$\nu_{2^\infty_F}$}}}x^{2^{\scalebox{0.7}{$e-\nu_{2^\infty_F}$}}}-1$.
			\end{itemize}
 \end{corollary}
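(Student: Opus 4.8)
The plan is to reduce every assertion to two ingredients: the degree formula, together with the elementary principle that a monic polynomial of degree $d$ over a field $K$ having $\alpha$ as a root equals $\operatorname{min}(\alpha,K)$ as soon as $[K(\alpha):K]=d$ (the minimal polynomial divides it, and both are monic of the same degree). Throughout, abbreviate $\nu:=\nu_{2^\infty_F}$. Since $\operatorname{Gal}(F(\zeta_{2^e})/F)$ is cyclic, Theorem \ref{maincyclic} supplies the unique $2$-tower decomposition $F(\zeta_{2^e})/F(\zeta_{2^{e-1}})/\cdots/F(\zeta_{2^{\nu}})/F$, so every consecutive step $F(\zeta_{2^{j}})\subseteq F(\zeta_{2^{j+1}})$ with $j\geq\nu$ has degree $2$, while $[F(\zeta_{2^{\nu}}):F]=2$ by Lemma \ref{lem}. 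Everything below is an application of these facts.

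First I would establish the degree formula. The tower above consists of the single step $F\subseteq F(\zeta_{2^{\nu}})$ of degree $2$ together with the $e-\nu$ steps $F(\zeta_{2^{j}})\subseteq F(\zeta_{2^{j+1}})$ for $j\in\llbracket\nu,e-1\rrbracket$, each of degree $2$. By multiplicativity of degrees in a tower, $[F(\zeta_{2^e}):F]=2^{1+(e-\nu)}=2^{e-(\nu-1)}$. The same truncated tower, stopped at $F(\zeta_{2^{e-i}})$, gives $[F(\zeta_{2^{e-i}}):F]=2^{e-(\nu+i-1)}$ for every $e-i\geq\nu$, a fact I will reuse when matching degrees below.

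Next, for the intermediate minimal polynomials, fix $i\in\llbracket0,e-\nu\rrbracket$ and $k\in\llbracket0,e-\nu-i\rrbracket$; then $e-(i+k)\geq\nu$, so $F(\zeta_{2^{e-(i+k)}})\subseteq\cdots\subseteq F(\zeta_{2^{e-i}})$ is a sub-tower of $k$ consecutive degree-$2$ cyclotomic steps, whence $[F(\zeta_{2^{e-i}}):F(\zeta_{2^{e-(i+k)}})]=2^{k}$ by multiplicativity. Since $\zeta_{2^{e-i}}^{2^{k}}=\zeta_{2^{e-(i+k)}}$, the element $\zeta_{2^{e-i}}$ is a root of the monic polynomial $x^{2^{k}}-\zeta_{2^{e-(i+k)}}$ of degree $2^{k}$ over $F(\zeta_{2^{e-(i+k)}})$, and by the principle above this is exactly $\operatorname{min}(\zeta_{2^{e-i}},F(\zeta_{2^{e-(i+k)}}))$.

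Finally I would treat the minimal polynomial over $F$ via the case split of assertion $(6)$ of Theorem \ref{maincyclic}, using the computation in its proof that $\operatorname{min}(\zeta_{2^{\nu}},F)$ equals $x^{2}-\zeta_{2^{\nu-1}}$ when $\zeta_4\in F$ and equals $x^{2}-\tq{2^{\nu}}x-1$ when $\zeta_4\notin F$ and $\tq{2^{\nu}}\in F$. In the first case $\zeta_{2^{\nu-1}}\in F$, and since $\zeta_{2^{e-i}}^{2^{e-(\nu+i-1)}}=\zeta_{2^{\nu-1}}$, the element $\zeta_{2^{e-i}}$ is a root of the monic polynomial $x^{2^{e-(\nu+i-1)}}-\zeta_{2^{\nu-1}}$, whose degree matches $[F(\zeta_{2^{e-i}}):F]=2^{e-(\nu+i-1)}$ from the truncated tower, so it is the minimal polynomial for all $i\in\llbracket0,e-(\nu-1)\rrbracket$. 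In the second case the substitution $\zeta_{2^{e-i}}^{2^{e-(\nu+i)}}=\zeta_{2^{\nu}}$ turns $x^{2^{e-(\nu+i-1)}}-\tq{2^{\nu}}x^{2^{e-(\nu+i)}}-1$ into $\zeta_{2^{\nu}}^{2}-\tq{2^{\nu}}\zeta_{2^{\nu}}-1=0$, so $\zeta_{2^{e-i}}$ is again a root of a monic polynomial of degree $2^{e-(\nu+i-1)}=[F(\zeta_{2^{e-i}}):F]$, and the specialization $i=0$ yields the displayed formula for $\operatorname{min}(\zeta_{2^e},F)$. The main obstacle here is bookkeeping rather than conceptual: one must verify that the degree formula, derived for exponents exceeding $\nu$, still returns the correct value at the boundary exponents $e-i=\nu$ (degree $2$) and, in the $\zeta_4\in F$ case, $e-i=\nu-1$ (degree $1$, where the formula degenerates to the linear polynomial $x-\zeta_{2^{\nu-1}}$), and that these degrees are exactly matched by the powers appearing in the claimed polynomials.
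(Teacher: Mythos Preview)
Your proof is correct and follows essentially the same approach as the paper: both use the unique $2$-tower decomposition from Theorem~\ref{maincyclic} to compute all the relevant degrees by multiplicativity, then identify each claimed polynomial as the minimal polynomial by matching its degree to the corresponding extension degree. The only cosmetic difference is that the paper cites Definition~\ref{def-tfn} and \cite[Lemma 2.9]{marques-quadratic} directly to obtain $\operatorname{min}(\zeta_{2^{\nu}},F)$ in the second case, whereas you extract this from the proof of $(5)\Rightarrow(6)$ in Theorem~\ref{maincyclic}; also note that $[F(\zeta_{2^{\nu}}):F]=2$ already follows from the definition of a $2$-tower decomposition rather than needing Lemma~\ref{lem}.
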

		
		\begin{proof} \text{ \ } 
	\begin{itemize} 		
	\item Suppose that $\zeta_4 \in F$. Let $i \in \llbracket 0, e-\nu_{2^\infty_F} \rrbracket$ and $k \in \llbracket 1, e-\nu_{2^\infty_F}-i \rrbracket$. 
	Clearly, $\zeta_{2^{e-i}}$ satisfies the polynomial $x^{2^k} - \zeta_{2^{e-(i+k)}}$. 
	Furthermore, as established in Theorem \ref{maincyclic} (3), we have for all $i \in \llbracket 0, e-\nu_{2^\infty_F} \rrbracket$,  $[F(\zeta_{2^{e-i}}):F(\zeta_{2^{e-(i+1)}})] = 2$. Using the multiplicativity property of the degree of a tower of field extensions, it follows that $[F(\zeta_{2^{e-i}}): F(\zeta_{2^{e-(i+k)}})] = 2^k$. Thus, $x^{2^k} - \zeta_{2^{e-(i+k)}}$ is irreducible over $F(\zeta_{2^{e-(i+k)}})$. In particular, when $i = 0$ and $k = e-(\nu_{2^\infty_F}+1)$, we get that $\operatorname{min}(\zeta_{2^e}, F) = x^{2^{\scalebox{0.5}{$e-(\nu_{2^\infty_F}-1)$}}} - \zeta_{2^{\scalebox{0.5}{$\nu_{2^\infty_F}-1$}}}$ and $[F(\zeta_{2^e}):F] = 2^{e-(\nu_{2^\infty_F}-1)}$.
			
		\item Suppose that $\zeta_4\notin F$ and $\tq{2^{\scalebox{0.5}{$\nu_{2^\infty_F}$}}} \in F$.  By Definition \ref{def-tfn} and \cite[Lemma 2.9]{marques-quadratic} we obtain that $x^2-\tq{2^{\scalebox{0.5}{$\nu_{2^\infty_F}$}}}x-1$ is the minimal polynomial of $\zeta_{2^{\scalebox{0.5}{$\nu_{2^\infty_F}$}}}$ over $F$. Since $\zeta_{2^{e-i}}$ is a root of irreducible polynomial $x^{2^{e-(\nu_{2^\infty_F}+i)}}-\zeta_{2^{\scalebox{0.5}{$\nu_{2^\infty_F}$}}}$ by Theorem \ref{maincyclic} (5), then $\operatorname{min}(\zeta_{2^{e-i}}, F)=x^{2^{e-(\nu_{2^\infty_F}+i-1)}}-(\tq{2^{\nu_{2^\infty_F}}})x^{2^{e-(\nu_{2^\infty_F}+i)}}-1$. In particular, when $i=0$, we have $\operatorname{min}(\zeta_{2^e}, F)=x^{2^{e-(\nu_{2^\infty_F}-1)}}-\tq{2^{\nu_{2^\infty_F}}}x^{2^{e-\nu_{2^\infty_F}}}-1$ and  $[F(\zeta_{2^{e}}):F]=2^{e-(\nu_{2^\infty_F}-1)}$ as desired.
		\end{itemize}
		\end{proof}
		
			In  the next result, we establish that when $\zeta_4\in F$, the field $F(\zeta_{2^e})$ coincides with the field $F({\tr{2^e}})$.
	We note that in this result we do not have a restriction on the degree of the extension contrary to Theorem \ref{maincyclic}.
	\begin{corollary}\label{equalextension}
		If $\zeta_4\in F$, then $F(\zeta_{2^e})=F(\ts{2^e})=F(\tq{2^e})$. 
	\end{corollary}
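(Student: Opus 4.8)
\textbf{Proof proposal for Corollary \ref{equalextension}.} The statement claims that whenever $\zeta_4 \in F$ we have $F(\zeta_{2^e}) = F(\ts{2^e}) = F(\tq{2^e})$, with no restriction on the degree of the extension. The plan is to work directly with the Galois action, bypassing the machinery of Theorem \ref{maincyclic} (which assumes $e > \nu_{2^\infty_F}$). One inclusion is immediate: since $\ts{2^e} = \zeta_{2^e} + \zeta_{2^e}^{-1}$ and $\tq{2^e} = \zeta_{2^e} - \zeta_{2^e}^{-1}$ both lie in $F(\zeta_{2^e})$ (the latter because $\zeta_{2^e}^{-1} = \overline{\zeta_{2^e}}$ is a power of $\zeta_{2^e}$), we certainly have $F(\ts{2^e}) \subseteq F(\zeta_{2^e})$ and $F(\tq{2^e}) \subseteq F(\zeta_{2^e})$. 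The content is the reverse inclusion.

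First I would dispose of the trivial cases where $[F(\zeta_{2^e}):F] \le 2$: if $\zeta_{2^e} \in F$ there is nothing to prove, and if $[F(\zeta_{2^e}):F] = 2$ one checks that the unique nontrivial automorphism cannot fix $\ts{2^e}$ or $\tq{2^e}$ using \cite[Lemma 2.9]{marques-quadratic} and the quadratic analysis of \cite{marques-quadratic}, so both generate the full extension. For the main case $[F(\zeta_{2^e}):F] > 2$, let $G = \operatorname{Gal}(F(\zeta_{2^e})/F)$ and recall from \cite[Theorem 2.3]{conradcyclotomic} that $G \cong \{[j] : [j] \in H\}$ for some subgroup $H \le U_{2^e}$, where $\sigma_j(\zeta_{2^e}) = \zeta_{2^e}^j$. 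The hypothesis $\zeta_4 = \zeta_{2^e}^{2^{e-2}} \in F$ forces every $[j] \in H$ to satisfy $j \equiv 1 \pmod 4$, exactly as in the proof of $(6) \Longrightarrow (1)$ in Theorem \ref{maincyclic}: indeed $\sigma_j$ fixes $\zeta_4$ iff $\zeta_{2^e}^{2^{e-2} j} = \zeta_{2^e}^{2^{e-2}}$ iff $2^{e-2}(j-1) \equiv 0 \pmod{2^e}$ iff $j \equiv 1 \pmod 4$. Using the description $U_{2^e} = \{[\varepsilon 5^k]\}$ with $\varepsilon \in \{\pm 1\}$, the condition $j \equiv 1 \pmod 4$ rules out the sign $\varepsilon = -1$, so $H \le \langle [5] \rangle_{U_{2^e}}$ and $G$ is cyclic.

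Now I would show that no nontrivial $\sigma_j \in G$ fixes $\ts{2^e}$ or $\tq{2^e}$, which by the Galois correspondence gives $F(\zeta_{2^e})^{G} \subseteq F(\ts{2^e})$ forcing $F(\ts{2^e}) = F(\zeta_{2^e})$, and similarly for $\tq{2^e}$. If $\sigma_j$ fixes $\zeta_{2^e} + \zeta_{2^e}^{-1}$, then $\zeta_{2^e}^j + \zeta_{2^e}^{-j} = \zeta_{2^e} + \zeta_{2^e}^{-1}$, i.e.\ $\zeta_{2^e}^{j} - \zeta_{2^e}$ equals $\zeta_{2^e}^{-1} - \zeta_{2^e}^{-j}$; multiplying through and factoring, $(\zeta_{2^e}^{j+1} - 1)(\zeta_{2^e}^{j-1} - 1) \zeta_{2^e}^{-j-1} \cdot (\text{unit})$ vanishes, so $\zeta_{2^e}^{j} = \zeta_{2^e}^{\pm 1}$, giving $j \equiv \pm 1 \pmod{2^e}$. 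Since $j \equiv 1 \pmod 4$ while $-1 \equiv 3 \pmod 4$ (using $e > 2$, so $4 \mid 2^e$), the case $j \equiv -1$ is excluded, leaving $j \equiv 1$, i.e.\ $\sigma_j = \operatorname{id}$. The argument for $\tq{2^e} = \zeta_{2^e} - \zeta_{2^e}^{-1}$ is the same computation: $\sigma_j$ fixes it iff $\zeta_{2^e}^j - \zeta_{2^e}^{-j} = \zeta_{2^e} - \zeta_{2^e}^{-1}$, which after clearing denominators factors as $(\zeta_{2^e}^{j}-\zeta_{2^e})(\zeta_{2^e}^{j+1}+1)\,\zeta_{2^e}^{-1-j} = 0$ up to a unit, forcing $j \equiv 1 \pmod{2^e}$ or $j \equiv 2^{e-1} - 1 \pmod{2^e}$; the latter is $\equiv 3 \pmod 4$ (again using $e > 2$) and is excluded, so $\sigma_j = \operatorname{id}$.

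The main obstacle I anticipate is handling the factorization cleanly without missing a case: the identities $\zeta^j + \zeta^{-j} = \zeta + \zeta^{-1}$ and $\zeta^j - \zeta^{-j} = \zeta - \zeta^{-1}$ should be cleared to polynomial identities in $\zeta = \zeta_{2^e}$ (a root of $\Phi_{2^e}$) and then factored, being careful that the relevant factors $\zeta^{j\pm 1} - 1$ resp.\ $\zeta^{j+1}+1$ can only vanish for specific residues of $j$ modulo $2^e$ — here one uses that $\zeta_{2^e}$ has exact order $2^e$. A cleaner alternative, which I would actually prefer to present, is to invoke \cite[Lemma 2.9]{marques-quadratic} directly: it classifies $\operatorname{min}(\zeta_{2^e}, M)$ over any codegree-$2$ subextension $M$ as one of $x^2 - \zeta_{2^{e-1}}$, $x^2 - \ts{2^e}x + 1$, $x^2 - \tq{2^e}x - 1$; combined with the fact that $\zeta_4 \in F$ implies $\zeta_{2^{e-1}} \notin F(\ts{2^e})$ and $\zeta_{2^{e-1}} \notin F(\tq{2^e})$ (otherwise one of $\ts{2^e}, \tq{2^e}$ would lie in a quadratic-or-smaller extension incompatible with $\zeta_4 \in F$, by the quadratic theory of \cite{marques-quadratic}), this pins down that $F(\ts{2^e})$ and $F(\tq{2^e})$ cannot be proper subextensions, giving the equalities. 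Either route reduces the corollary to the quadratic case already treated in \cite{marques-quadratic} together with the congruence bookkeeping $j \equiv 1 \pmod 4$.
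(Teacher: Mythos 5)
Your primary argument is correct, but it takes a genuinely different route from the paper's. The paper proceeds by contradiction: if $F(\ts{2^e})$ were a proper subfield, then, $\zeta_{2^e}$ being a root of $x^2-\ts{2^e}x+1$ over it, the codegree would be $2$, and \cite[Lemma 2.9]{marques-quadratic} would give $o_{F(\ts{2^e})}(\zeta_{2^e})=2^{e-1}$, so that $\zeta_4\in F(\zeta_{2^e})\setminus F(\ts{2^e})$, contradicting $\zeta_4\in F$; the case of $\tq{2^e}$ is analogous. You instead work directly with the Galois action: writing each automorphism as $\sigma_j$ with $[j]\in U_{2^e}$ (via \cite[Lemma 2.1, Theorem 2.3]{conradcyclotomic}), the hypothesis $\zeta_4\in F$ forces $j\equiv 1\pmod 4$, while your factorizations show that fixing $\ts{2^e}$ forces $j\equiv\pm 1\pmod{2^e}$ and fixing $\tq{2^e}$ forces $j\equiv 1$ or $j \equiv 2^{e-1}-1\pmod{2^e}$; the unwanted residues are $\equiv 3\pmod 4$ for $e\geq 3$ (and $e\leq 2$ is the trivial case $\zeta_{2^e}\in F$), so only $\sigma_1=\operatorname{id}$ fixes $\tr{2^e}$, hence $\operatorname{Gal}(F(\zeta_{2^e})/F(\tr{2^e}))$ is trivial and $F(\tr{2^e})=F(\zeta_{2^e})$. (State the correspondence step that way: your line ``$F(\zeta_{2^e})^{G}\subseteq F(\ts{2^e})$'' proves nothing by itself, though the intended subgroup is clearly the stabilizer of $\ts{2^e}$.) Your computation is self-contained, needing only the embedding of the Galois group in $U_{2^e}$ and elementary congruences, and it in fact covers $[F(\zeta_{2^e}):F]=2$ uniformly, so your separate appeal to the quadratic analysis there is unnecessary; the paper's proof is shorter because it outsources the key step to \cite[Lemma 2.9]{marques-quadratic} --- which is essentially what your sketched ``cleaner alternative'' does, except that the relevant consequence of that lemma is that $\zeta_4$ would be missing from the would-be proper subfield, not your stated condition on $\zeta_{2^{e-1}}$.
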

	
	\begin{proof}
		Suppose that $\zeta_4\in F$. 
		When $\zeta_{2^e}\in F$, the result is clear, since ${\ts{2^e}}$ and ${\tq{2^e}}$ are both in $F$. 
Now suppose that $\zeta_{2^e}\notin F$. Clearly, we have $F({\ts{2^e}})\subseteq F(\zeta_{2^e})$. To prove the reverse inclusion, we assume by contradiction that $F({\ts{2^e}})\subsetneq F(\zeta_{2^e})$. This implies that $[F(\zeta_{2^e}):F({\ts{2^e}})]=2$, since $\zeta_{2^e}\notin F({\ts{2^e}})$ and it is a root of the polynomial $x^2-{\ts{2^e}}x+1$ with coefficient in $F({\ts{2^e}})$. By \cite[Lemma 2.9]{marques-quadratic}, we find that $o_{F({\ts{2^e}})}(\zeta_{2^e})=2^{e-1}$. However, this implies that $\zeta_4\in F(\zeta_{2^e})\setminus F({\ts{2^e}})$, which contradicts the assumption that $\zeta_4\in F$. Hence, we conclude that $F({\ts{2^e}})=F(\zeta_{2^e})$.  Similarly, we can prove that  $F(\zeta_{2^e})=F({\tq{2^e}})$. Therefore, we have established the desired result.
		
	\end{proof}

			\section{The subextensions \texorpdfstring{$F(\tr{2^e})$}{Lg} of \texorpdfstring{$F(\zeta_{2^e})/F$}{Lg}}
We begin this section by establishing that in the non-cyclic case, a $2$-power cyclotomic extension precisely admits three subextensions of codegree $2$, which we will describe.
		\begin{lemma} \label{codegree2}
		 If $F(\zeta_{2^e})/F$ is a non-cyclic extension, then $F(\zeta_{2^e})/F$ admits exactly three distinct subextensions $M$ such that $[F(\zeta_{2^{e}}):M]=2$. More precisely, we have either:
		\begin{itemize}
			\item $M=F(\zeta_{2^{e-1}})= F(\zeta_{2^e})^{\langle \sigma_{5^{2^{e-3}}}\rangle}$,
			\item $M=F(\ts{2^e})=F(\zeta_{2^e})^{\langle \sigma_{-1}\rangle}$ and $F(\zeta_{2^{e}})=M(\zeta_4)= M(\tq{2^e})$, or
			\item $M=F(\tq{2^e})= F(\zeta_{2^e})^{\langle \sigma_{-5^{2^{e-3}}}\rangle}$ and $F(\zeta_{2^{e}})=M(\zeta_4)= M(\ts{2^e})$.
		\end{itemize}
		In particular, $\langle \sigma_{-1},\sigma_{-5^{2^{e-3}}} \rangle \subseteq \operatorname{Gal}(F(\zeta_{2^e})/F)$, and $|\langle \sigma_{-1},\sigma_{-5^{2^{e-3}}}\rangle|=4$ 
		\end{lemma}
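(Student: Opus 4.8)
The plan is to read off the codegree-$2$ subextensions directly from the Galois correspondence. Writing $G:=\operatorname{Gal}(F(\zeta_{2^e})/F)$, a subextension $M/F$ of $F(\zeta_{2^e})/F$ satisfies $[F(\zeta_{2^e}):M]=2$ exactly when $M=F(\zeta_{2^e})^H$ for some subgroup $H\le G$ of order $2$, and such subgroups correspond bijectively to the involutions of $G$. So I would first count the involutions of $G$ by group theory, then identify the fixed field attached to each, and finally check the equalities of fields asserted in the second and third bullet points.

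For the group theory: by \cite[Theorem 2.3]{conradcyclotomic} we identify $G$ with a subgroup of $U_{2^e}$ via $\sigma_j\mapsto[j]$, and since $[F(\zeta_{2^e}):F]>2$ forces $e\ge 3$, we have $U_{2^e}\cong\mathbb{Z}/2\mathbb{Z}\times\mathbb{Z}/2^{e-2}\mathbb{Z}$, whose $2$-torsion subgroup is the Klein four-group $U_{2^e}[2]=\{[1],[-1],[5^{2^{e-3}}],[-5^{2^{e-3}}]\}$ (using $5^{2^{e-3}}\equiv 1+2^{e-1}\pmod{2^e}$). As $G$ is a non-cyclic finite abelian $2$-group, the structure theorem gives $|G[2]|\ge 4$; since $G[2]\subseteq U_{2^e}[2]$ we conclude $G[2]=U_{2^e}[2]$. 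Hence $G$ has exactly the three involutions $\sigma_{-1},\sigma_{5^{2^{e-3}}},\sigma_{-5^{2^{e-3}}}$, so exactly three order-$2$ subgroups, hence exactly three distinct codegree-$2$ subextensions. Moreover $\sigma_{-1}\sigma_{-5^{2^{e-3}}}=\sigma_{5^{2^{e-3}}}$, so $\langle\sigma_{-1},\sigma_{-5^{2^{e-3}}}\rangle=G[2]=U_{2^e}[2]\subseteq G$ has order $4$, which is the final assertion.

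It then remains to identify the three fixed fields, which is a direct computation with $\sigma_j(\zeta_{2^e})=\zeta_{2^e}^{j}$. For $\sigma_{5^{2^{e-3}}}$ one has $\sigma_{5^{2^{e-3}}}(\zeta_{2^e})=\zeta_{2^e}^{1+2^{e-1}}=-\zeta_{2^e}$ while $\zeta_{2^{e-1}}=\zeta_{2^e}^2$ is fixed, so $F(\zeta_{2^{e-1}})\subseteq F(\zeta_{2^e})^{\langle\sigma_{5^{2^{e-3}}}\rangle}\subsetneq F(\zeta_{2^e})$; since $\zeta_{2^e}$ is a root of $x^2-\zeta_{2^{e-1}}$ over $F(\zeta_{2^{e-1}})$, comparing degrees in this chain forces $F(\zeta_{2^{e-1}})=F(\zeta_{2^e})^{\langle\sigma_{5^{2^{e-3}}}\rangle}$. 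For $\sigma_{-1}$, which sends $\zeta_{2^e}\mapsto\zeta_{2^e}^{-1}$, the element $\ts{2^e}$ is fixed and $\zeta_{2^e}$ is a root of $x^2-\ts{2^e}x+1$ over $F(\ts{2^e})$, so the same degree comparison yields $F(\ts{2^e})=F(\zeta_{2^e})^{\langle\sigma_{-1}\rangle}$; furthermore $\sigma_{-1}$ sends $\zeta_4\mapsto-\zeta_4\ne\zeta_4$ and $\tq{2^e}\mapsto-\tq{2^e}\ne\tq{2^e}$ (here $\wp\ne 2$ and $\zeta_{2^e}^2\notin\{1,-1\}$, as $\zeta_{2^e}$ has order $2^e\ge 8$), so $\zeta_4,\tq{2^e}\notin F(\ts{2^e})$ and each generates the degree-$2$ extension $F(\zeta_{2^e})$ over $F(\ts{2^e})$. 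For $\sigma_{-5^{2^{e-3}}}$, using $\zeta_{2^e}^{2^{e-1}}=-1$ one computes $\sigma_{-5^{2^{e-3}}}(\zeta_{2^e})=-\zeta_{2^e}^{-1}$, whence $\tq{2^e}$ is fixed, $\zeta_{2^e}$ is a root of $x^2-\tq{2^e}x-1$ over $F(\tq{2^e})$, and the analogous argument gives $F(\tq{2^e})=F(\zeta_{2^e})^{\langle\sigma_{-5^{2^{e-3}}}\rangle}$ together with $F(\zeta_{2^e})=F(\tq{2^e})(\zeta_4)=F(\tq{2^e})(\ts{2^e})$. (The alternative list of candidate minimal polynomials for $\zeta_{2^e}$ over a codegree-$2$ subextension in \cite[Lemma 2.9]{marques-quadratic} can be invoked here as well.)

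I expect the main obstacle to be the clean passage through the group theory: one must make fully precise that non-cyclicity of $G$ forces $G[2]$ to be the \emph{entire} Klein four-group $U_{2^e}[2]$, and hence that $G$ has \emph{exactly} three involutions — everything after that (identifying the fixed fields, the codegree computations via multiplicativity of degrees in towers, and the non-membership of $\zeta_4,\ts{2^e},\tq{2^e}$ in the respective subfields) is routine. A minor point worth recording is that $e\ge 3$, which follows from $[F(\zeta_{2^e}):F]>2$, so that $U_{2^e}[2]$ genuinely has order $4$ and $\zeta_4=\zeta_{2^e}^{2^{e-2}}\in F(\zeta_{2^e})$.
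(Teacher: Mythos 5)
Your proposal is correct and follows essentially the same route as the paper's proof: the Galois correspondence together with the identification of the three involutions $\sigma_{-1}$, $\sigma_{5^{2^{e-3}}}$, $\sigma_{-5^{2^{e-3}}}$ of $U_{2^e}$ and of their fixed fields $F(\ts{2^e})$, $F(\zeta_{2^{e-1}})$, $F(\tq{2^e})$, plus the degree comparison giving $M(\zeta_4)=F(\zeta_{2^e})$. The only differences are cosmetic: where the paper invokes \cite[Lemma 2.9]{marques-quadratic} and states that a non-cyclic group has three order-$2$ subgroups, you verify these points directly (structure theorem argument that the $2$-torsion of the Galois group is the full Klein four-group, and explicit quadratic relations for $\zeta_{2^e}$ over each candidate subfield), which if anything makes the counting step more explicit than in the paper.
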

		\begin{proof}
			Suppose that $\operatorname{Gal}(F(\zeta_{2^e})/F)$ is not cyclic. By  \cite[Theorem 2.3]{conradcyclotomic} along with \cite[Lemma 2.1]{conradcyclotomic}, we have that  $\operatorname{Gal}(F(\zeta_{2^e})/F)$ is isomorphic to a subgroup of $\mathbb{Z}/ 2 \mathbb{Z} \times \mathbb{Z}/2^{e-2} \mathbb{Z}$. According to the fundamental theorem of Galois theory, we obtain three distinct subextensions of codegree $2$ corresponding to the distinct invariants subfields under the three subgroups of order $2$ in $\operatorname{Gal}(F(\zeta_{2^e})/F)$.
			Using \cite[Lemma 2.9]{marques-quadratic}, we conclude that $M$ can take one of the following three forms;  $F(\zeta_{2^{e-1}})$, $F(\ts{2^e})$ and $F(\tq{2^e})$. These represent the three distinct possibilities for the subextension of codegree 2.
			Moreover, we have that $F({\ts{2^e}})({\tq{2^e}})= F({\tq{2^e}})({\ts{2^e}})=F(\zeta_{2^e})$ since  $[F(\zeta_{2^e}):F({\ts{2^e}})]=[F(\zeta_{2^e}):F({\tq{2^e}})]=2$, ${\tq{2^e}} \notin  F({\ts{2^e}})$ and ${\ts{2^e}} \notin F({\tq{2^e}})$.
			
	Moreover,  let us assume $M\in \{F({\ts{2^e}}), F({\tq{2^e}})\}$. We have $\zeta_4 \in F(\zeta_{2^e})\setminus M$ since $o_M(\zeta_{2^e})=2^{e-1}$ by \cite[Lemma 2.9]{marques-quadratic}. Consequently, we obtain $M(\zeta_4)\subseteq F(\zeta_{2^e})$. Considering $[M(\zeta_4):M]=2$ and $[F(\zeta_{2^e}):M]=2$, we can conclude that $M(\zeta_4)=F(\zeta_{2^e})$.
			
			Furthermore, by observing that $5^{2^{e-3}}\equiv 1+2^{e-1} \mod 2^e$, we can easily check that
		$F(\zeta_{2^{e-1}})= F(\zeta_{2^e})^{\langle \sigma_{5^{2^{e-3}}}\rangle}$, $F(\zeta_{2^e})^{\langle \sigma_{-1}\rangle}= F({\ts{2^e}})$ and $F(\zeta_{2^e})^{\langle \sigma_{-5^{2^{e-3}}}\rangle}= F({\tq{2^e}})$. The rest of the proof is clear. 
		\end{proof}
			
		The following result explore the structure of the subsextensions $F({\tr{2^e}})/F$ of  $F(\zeta_{2^e})/F$. We establish that these extensions are cyclic with a unique subextension of codegree $2$, namely $F(\ts{2^{e-1}})$. Moreover, we will determine the $2$-tower decomposition of $F({\tr{2^e}})/F$, which is crucial for determining the $2$-tower decomposition of $F(\zeta_{2^e})/F$ in the non-cyclic case. 
\begin{theorem} \label{te}
		Suppose that $e>\nu_{2^\infty_F}$. The following assertions hold.
	\begin{enumerate}
		\item  $F({\ts{2^e}})/F$ and $F({\tq{2^e}})/F$ are cyclic extensions.
		\item The extension $F({\tr{2^e}})/F$ admits a unique subextension $M$ with a codegree of $2$, that is, $M = F(\ts{2^{e-1}})$. Furthermore, the minimal polynomial of ${\tr{2^e}}$ over $F(\ts{2^{e-1}})$ is given by $x^2-(\ts{2^{e-1}}\pm2)$.
			\item $F({\tr{2^e}})/F$ can be uniquely expressed as a $2$-tower decomposition: 
		$$F(\tr{2^e})/F(\ts{2^{e-1}})/\cdots/F(\ts{2^{\scalebox{0.5}{$\nu_{2^\infty_F}^++1$}}})/F.$$ 
		Moreover, we have $[F(\tr{2^e}):F]=2^{e-\nu_{2^\infty_F}^+}$.
			\item We have that 
		\begin{itemize} 
			\item $\operatorname{Gal}(F({\ts{2^e}})/F)$ is isomorphic to a subgroup of $U_{2^e}/\langle [-1] \rangle$. More precisely, $\operatorname{Gal}(F({\ts{2^e}})/F)= \langle [\sigma_{5^{2^{\scalebox{0.5}{$\nu_{2^\infty_F}^+-2$}}}}]_{\langle \sigma_{-1}\rangle} \rangle$.
			\item $\operatorname{Gal}(F({\tq{2^e}})/F)$ is isomorphic to a subgroup of $U_{2^e}/\langle [-5^{2^{e-3}}] \rangle$. More precisely, $\operatorname{Gal}(F({\tq{2^e}})/F)= \langle [\sigma_{5^{2^{\scalebox{0.5}{$\nu_{2^\infty_F}^+-2$}}}}]_{\langle \sigma_{-5^{2^{e-3}}} \rangle} \rangle.$
		\end{itemize} 
	where $[\sigma]_G$ is the equivalence class of $\sigma$ an element of a group $G$ in the quotient group $G\big /H$ where $H$ is a normal subgroup of $G$.
	\end{enumerate}
\end{theorem}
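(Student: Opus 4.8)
The plan is to prove Theorem~\ref{te} by leveraging the cyclic case, Theorem~\ref{maincyclic}, applied to the field $K = F(\zeta_{2^{\nu_{2^\infty_F}^+ + 1}})$ or more directly by a direct analysis of the Galois action. First I would observe that $F(\ts{2^e})$ and $F(\tq{2^e})$ are, by Lemma~\ref{codegree2}, exactly the fixed fields $F(\zeta_{2^e})^{\langle \sigma_{-1}\rangle}$ and $F(\zeta_{2^e})^{\langle \sigma_{-5^{2^{e-3}}}\rangle}$, so their Galois groups over $F$ are the quotients $G/\langle\sigma_{-1}\rangle$ and $G/\langle\sigma_{-5^{2^{e-3}}}\rangle$ where $G = \operatorname{Gal}(F(\zeta_{2^e})/F)$ — note these subgroups are normal since $G$ is abelian. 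For part~(1): $G$ is abelian and isomorphic to a subgroup of $U_{2^e} \cong \mathbb{Z}/2\mathbb{Z} \times \mathbb{Z}/2^{e-2}\mathbb{Z}$; the key point is that quotienting by the order-$2$ subgroup generated by $[-1]$ (respectively $[-5^{2^{e-3}}]$) kills the ``$\mathbb{Z}/2\mathbb{Z}$ factor'' so that the quotient embeds in $U_{2^e}/\langle[-1]\rangle \cong \mathbb{Z}/2^{e-2}\mathbb{Z}$, which is cyclic; hence $F(\ts{2^e})/F$ and $F(\tq{2^e})/F$ are cyclic. The main subtlety is checking that $[-1]$ and $[-5^{2^{e-3}}]$ are not in $\langle[5]\rangle$ so that the quotient by either one really does produce a cyclic group and not a smaller non-cyclic obstruction; this is a direct computation in $U_{2^e}$ since $U_{2^e} = \{[\epsilon 5^k]\}$ with the two factors independent.

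For part~(4) I would make the embeddings explicit: $\operatorname{Gal}(F(\ts{2^e})/F) \cong G/\langle\sigma_{-1}\rangle \hookrightarrow U_{2^e}/\langle[-1]\rangle$, and similarly with $\langle[-5^{2^{e-3}}]\rangle$. To identify the generator, I would use that by the definition of $\nu_{2^\infty_F}^+$ and \cite[Theorem 3.24]{marques-quadratic}, $\zeta_{2^{\nu_{2^\infty_F}^+}} + \zeta_{2^{\nu_{2^\infty_F}^+}}^{-1} \in F$ while $\zeta_{2^{\nu_{2^\infty_F}^+ + 1}} + \zeta_{2^{\nu_{2^\infty_F}^+ + 1}}^{-1} \notin F$; this pins down that the automorphism $\sigma$ of minimal order in $G$ that fixes $\ts{2^{\nu_{2^\infty_F}^+}}$ but moves $\ts{2^{\nu_{2^\infty_F}^++1}}$ must act as $\zeta_{2^e} \mapsto \zeta_{2^e}^{5^{2^{\nu_{2^\infty_F}^+-2}}}$ up to sign, because $\sigma_{5^k}$ fixes $\ts{2^m}$ precisely when $5^k \equiv \pm1 \pmod{2^m}$, i.e.\ when $2^{m} \mid 5^k - 1$ — and $v_2(5^k-1) = k+2$ — so the condition $\ts{2^{\nu_{2^\infty_F}^+}}\in F$ forces the generator's index to be $5^{2^{\nu_{2^\infty_F}^+ - 2}}$. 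Working modulo $\langle\sigma_{-1}\rangle$ (resp.\ $\langle\sigma_{-5^{2^{e-3}}}\rangle$) then gives the stated generators.

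For parts~(2) and~(3): since $F(\ts{2^e})/F$ is cyclic (from part~(1)), it has a unique subextension of each codegree, in particular a unique codegree-$2$ subextension; I would identify it as $F(\ts{2^{e-1}})$ by checking $[F(\ts{2^e}):F(\ts{2^{e-1}})] = 2$ via the polynomial $x^2 - (\ts{2^{e-1}} + 2)$ (which has $\ts{2^e}$ as a root, using $(\zeta_{2^e}+\zeta_{2^e}^{-1})^2 = \zeta_{2^{e-1}} + \zeta_{2^{e-1}}^{-1} + 2$) and checking the Galois group of $F(\ts{2^e})/F(\ts{2^{e-1}})$ is nontrivial, i.e.\ $\ts{2^{e-1}} \in F(\ts{2^e})$ but $\ts{2^e}\notin F(\ts{2^{e-1}})$ — the latter because an automorphism moving $\ts{2^e}$ fixing $\ts{2^{e-1}}$ exists in $G/\langle\sigma_{-1}\rangle$. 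Iterating the relation $\ts{2^{i}}^2 = \ts{2^{i-1}} + 2$ down the tower, each step $F(\ts{2^i})/F(\ts{2^{i-1}})$ has degree $2$ as long as $\ts{2^{i-1}}\notin F$, which by definition of $\nu_{2^\infty_F}^+$ holds precisely for $i-1 > \nu_{2^\infty_F}^+$, i.e.\ down to the base $F(\ts{2^{\nu_{2^\infty_F}^+ + 1}})/F$; uniqueness of the tower follows from cyclicity (unique subextension of each codegree). The degree count $[F(\ts{2^e}):F] = 2^{e - \nu_{2^\infty_F}^+}$ then follows by multiplicativity over the $e - \nu_{2^\infty_F}^+$ steps of the tower. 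The main obstacle I anticipate is the bookkeeping with the $2$-adic valuation of $5^k - 1$ and the signs in identifying the precise generator in part~(4), together with carefully justifying that $F(\ts{2^{\nu_{2^\infty_F}^+ + 1}}) \ne F$ (equivalently $e > \nu_{2^\infty_F} \geq \nu_{2^\infty_F}^+$ via Lemma~\ref{lem} and the definition of $\nu_{2^\infty_F}$), since $\nu_{2^\infty_F}$ and $\nu_{2^\infty_F}^+$ differ exactly when $F$ has property $\mathcal{C}_2$ and that edge case must be tracked.
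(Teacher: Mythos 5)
Your route for parts (1)--(3) is essentially the paper's: realize $F(\ts{2^e})$ and $F(\tq{2^e})$ as the fixed fields of $\langle\sigma_{-1}\rangle$ and $\langle\sigma_{-5^{2^{e-3}}}\rangle$, embed the quotients into the cyclic groups $U_{2^e}/\langle[-1]\rangle$ and $U_{2^e}/\langle[-5^{2^{e-3}}]\rangle$, use the relation $(\tr{2^e})^2=\ts{2^{e-1}}\pm 2$ together with an automorphism fixing $\ts{2^{e-1}}$ and negating $\tr{2^e}$, and induct down the tower. The genuine gap is that you invoke Lemma~\ref{codegree2} unconditionally, whereas its hypothesis is that $F(\zeta_{2^e})/F$ is \emph{non-cyclic}. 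When $F(\zeta_{2^e})/F$ is cyclic, $\sigma_{-1}$ is not an element of $\operatorname{Gal}(F(\zeta_{2^e})/F)$ at all (since $[-1]$ lies in neither $\langle[5]\rangle$ nor $\langle[-5]\rangle$ in $U_{2^e}$), so the descriptions $F(\ts{2^e})=F(\zeta_{2^e})^{\langle\sigma_{-1}\rangle}$ and the quotient $G/\langle\sigma_{-1}\rangle$ do not make sense, and indeed $F(\tr{2^e})=F(\zeta_{2^e})$ is the whole field by Theorem~\ref{maincyclic}. Your argument as written therefore only covers the non-cyclic case; you must split into the two cases, as the paper does (cyclic case: all four assertions follow directly from Theorem~\ref{maincyclic}; non-cyclic case: Lemma~\ref{codegree2} applies). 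You allude to ``leveraging Theorem~\ref{maincyclic}'' at the outset, but the dichotomy is never actually carried out, and several of your intermediate claims are literally false in the cyclic case.

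Two smaller points. In part (4) your valuation identity is misstated: $v_2(5^k-1)=v_2(k)+2$, not $k+2$; with the corrected formula your fixed-element criterion (which $\ts{2^m}$ lie in $F$) does pin down the generator $[\sigma_{5^{2^{\nu_{2^\infty_F}^+-2}}}]$, and this is a legitimate, mildly different identification from the paper's, which instead uses the degree $2^{e-\nu_{2^\infty_F}^+}$ from part (3) and the uniqueness of a subgroup of each order in a cyclic $2$-group. In part (3), the claim that each step $F(\ts{2^i})/F(\ts{2^{i-1}})$ has degree $2$ ``as long as $\ts{2^{i-1}}\notin F$'' is not the right criterion: what is needed is $\ts{2^i}\notin F(\ts{2^{i-1}})$, and the clean way to get it (and the uniqueness of the tower) is to apply part (2) at each level $i>\nu_{2^\infty_F}$, exactly as the paper's induction does, with the bottom step $F(\ts{2^{\nu_{2^\infty_F}^++1}})/F$ handled from the definition of $\nu_{2^\infty_F}^+$.
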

	\begin{proof}
	\begin{enumerate} 
	\item If $[F({\tr{2^e}}):F]\leq 2$, the result follows easily since $\operatorname{Gal}(F({\tr{2^e}})/F)$ is a cyclic group. Now, we suppose $[F({\tr{2^e}}):F]>2$. Therefore, we also have $[F(\zeta_{2^e}):F]>2$ since $F({\tr{2^e}})\subseteq F(\zeta_{2^e})$. When $F(\zeta_{2^e})/F$ is cyclic, the result follows easily from Theorem \ref{maincyclic} since $[F(\zeta_{2^e}):F]>2$.
	
	We suppose that $F(\zeta_{2^e})/F$ is non-cyclic. By \cite[Theorem 2.3]{conradcyclotomic}, we know that $\operatorname{Gal}(F(\zeta_{2^e})/F)$ is isomorphic to a subgroup of $U_{2^e}$. Also, from Lemma \ref{codegree2}, we know that $[F(\zeta_{2^e}): F({\ts{2^e}})]=[F(\zeta_{2^e}):F({\tq{2^e}})]=2$, $F(\zeta_{2^e})^{\langle \sigma_{-1}\rangle}= F({\ts{2^e}})$ and $F(\zeta_{2^e})^{\langle \sigma_{-5^{2^{e-3}}}\rangle}= F({\tq{2^e}})$.  
	Since $\nu_{2^\infty_F}<e$, we know $\zeta_{2^e}\notin F({\ts{2^e}})$. 
 Now set $G:=\operatorname{Gal}(F(\zeta_{2^e})/F)$.	 Since $\langle \sigma_{-1}\rangle$ is a normal subgroup of $G$, then by \cite[Chapter 14, Theorem 14 (4)]{dummit} we have $F({\ts{2^e}})/F$ is a Galois extension with $\operatorname{Gal}(F({\ts{2^e}})/F)\cong G\big /\langle \sigma_{-1}\rangle $ isomorphic to a subgroup of $U_{2^e}/\langle [-1]\rangle \cong \mathbb{Z}/2^{e-2}\mathbb{Z}$. Thus, $F({\ts{2^e}})/F$ is cyclic.
	Similarly, we find that $\operatorname{Gal}(F({\tq{2^e}})/F)\cong G\big /\langle \sigma_{-5^{2^{e-3}}}\rangle $ is isomorphic to a subgroup of $U_{2^e} \big / \langle [-5^{2^{e-3}}]\rangle $ establishing that $F({\tq{2^e}})/F$ is a cyclic extension.  
	Thus, the proof is complete.

		
\item Since $e > \nu_{2^\infty_F}$ by assumption, it follows from Lemma \ref{lem} that $[F(\zeta_{2^e}):F] > 2$. Consequently, $[F(\tr{2^e}):F] \geq 2$. 
We start  by establishing that $F(\ts{2^{e-1}})/F$ is the only subextension of codegree $2$ in $F({\ts{2^e}})/F$ (resp. $F({\tq{2^e}})/F$). When $F(\zeta_{2^e})/F$ is cyclic, the result following directly from Theorem \ref{maincyclic}. 
		
	Now let us assume that $F(\zeta_{2^e})/F$ is non-cyclic. By Lemma \ref{codegree2}, we have $[F(\zeta_{2^e}):F({\ts{2^e}})]=[F(\zeta_{2^e}):F({\tq{2^e}})]=2$ and $\langle \sigma_{-1},\sigma_{-5^{2^{e-3}}} \rangle \subseteq \operatorname{Gal}(F(\zeta_{2^e})/F)$. 
	Also, according to $(1)$ above, we know that both $F({\ts{2^e}})/F$ and
		$F({\tq{2^e}})/F$ are cyclic extensions. Set $G:=\operatorname{Gal}(F(\zeta_{2^e})/F)$.
		Considering the action of the Galois group on ${\ts{2^e}}$ and $\ts{2^{e-1}}$, we have:
\begin{alignat*}{2}
				[\sigma_{5^{2^{e-3}}}]_{\langle \sigma_{-1}\rangle}({\ts{2^e}}) = \sigma_{5^{2^{e-3}}}({\ts{2^e}}) = -(\zeta_{2^{e}}+\zeta_{2^{e}}^{-1})= -{\ts{2^e}}
			\end{alignat*}
				and
				\begin{alignat*}{2}
				[\sigma_{5^{2^{e-3}}}]_{\langle \sigma_{-1}\rangle}(\ts{2^{e-1}}) = \sigma_{5^{2^{e-3}}}(\ts{2^{e-1}}) &=\zeta_{2^{e-1}}+\zeta_{2^{e-1}}^{-1}=\ts{2^{e-1}}.
			\end{alignat*}
		
		That implies that $F(\ts{2^{e-1}})=F({\ts{2^e}})^{\langle[\sigma_{5^{2^{e-3}}}]_{\langle \sigma_{-1}\rangle} \rangle}$. Since $\langle[\sigma_{5^{2^{e-3}}}]_{\langle \sigma_{-1}\rangle} \rangle$ has order $2$ in $G\big /\langle \sigma_{-1}\rangle$, we conclude that $[F({\ts{2^e}}):F(\ts{2^{e-1}})]=2$. By similar reasoning, we can prove that $F(\ts{2^{e-1}})=F({\tq{2^e}})^{\langle[\sigma_{5^{2^{e-3}}}]_{\langle \sigma_{-5^{2^{e-3}}}\rangle} \rangle}$, and once again, $[F({\tq{2^e}}):F(\ts{2^{e-1}})]=2$. Thus, we have shown that $F(\ts{2^{e-1}})$ is the unique subextension of codegree $2$ in both $F({\ts{2^e}})/F$ and $F({\tq{2^e}})/F$. The rest of the statement follows easily from this. 
		

	\item By $(2)$ above, we know that $F({\tr{2^e}})/F$ has a unique subextension $F(\ts{2^{e-1}})/F$ of codegree $2$. To establish the result, it is enough to obtain the $2$-tower decomposition of $F({\ts{2^e}})/F$, for any $e\geq \nu_{2^\infty_F}^++1$. We will proceed by induction on $e$ to prove this. The initial case, $e=\nu_{2^\infty_F}^++1$, is deduced from Definition \ref{def-tfn} and $(2)$ above. The induction step also follows from $(2)$ completing the proof. The multiplicativity property of the degree of a tower of field extensions gives us the formula for $[F({\tr{2^e}}):F]$.	

	\item In the proof of $(1)$ above, we established that $\operatorname{Gal}(F({\ts{2^e}})/F)$(resp. $\operatorname{Gal}(F({\tq{2^e}})/F)$) is isomorphic to a subgroup of $U_{2^e}/\langle [-1] \rangle$ (resp.  $U_{2^e}/\langle [-5^{2^{e-3}}] \rangle$). Since by $(3)$  above, we know that $[F({\ts{2^e}}):F]=2^{e-\nu_{2^\infty_F}^+}$ and the only subgroup of $U_{2^e}/\langle [-1] \rangle$ of order $2^{e-\nu_{2^\infty_F}^+}$ is the group generated by $\big[5^{2^{\scalebox{0.5}{$\nu_{2^\infty_F}^+-2$}}}\big]_{[-1]}$,  $\operatorname{Gal}(F({\ts{2^e}})/F)= \langle [\sigma_{5^{2^{\scalebox{0.5}{$\nu_{2^\infty_F}^+-2$}}}}]_{\langle \sigma_{-1}\rangle} \rangle$. 
A similar argument gives us $\operatorname{Gal}(F({\tq{2^e}})/F)= \langle [\sigma_{5^{2^{\scalebox{0.5}{$\nu_{2^\infty_F}^+-2$}}}}]_{\langle \sigma_{-5^{2^{e-3}}} \rangle} \rangle.$ 

\end{enumerate}	
\end{proof}

	By a straightforward induction, we can deduce the following result from the previous result. 
	\begin{corollary}
		Let $\nu_{2^\infty_F} < e$. In the field extension $F(\ts{2^e})/F$ (resp. $F(\tq{2^e})/F$), there exists a unique intermediate subextension $M$ with a codegree of $2^{e-k}$, for each $k\in \llbracket 1,\nu_{2^\infty_F}^+\rrbracket$. Specifically, we have $M = F(\ts{2^{k}})$ for each $k\in \llbracket 1,\nu_{2^\infty_F}^+\rrbracket$. 
	\end{corollary}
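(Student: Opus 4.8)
The statement to prove is the final corollary: in $F(\ts{2^e})/F$ (resp.\ $F(\tq{2^e})/F$) there is, for each $k \in \llbracket 1, \nu_{2^\infty_F}^+\rrbracket$, a unique intermediate subextension of codegree $2^{e-k}$, namely $F(\ts{2^k})$.

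\textbf{Plan.} The strategy is a straightforward induction on $e - k$ (equivalently, on the codegree exponent), using Theorem \ref{te} as the engine at each step. Theorem \ref{te}(2) already gives the base case: $F(\ts{2^{e-1}})/F$ is the \emph{unique} subextension of $F(\ts{2^e})/F$ of codegree $2$, and it is of the asserted form with $k = e-1$ (noting $e-1 \geq \nu_{2^\infty_F}^+$ when $k$ ranges as claimed, so the index is in range after reindexing). Moreover Theorem \ref{te}(1) tells us $F(\ts{2^e})/F$ is cyclic, hence every subextension is cyclic and the whole subextension lattice is a chain — this is really what forces uniqueness at every level.

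\textbf{Key steps.} First I would record that $F(\ts{2^e})/F$ is cyclic of degree $2^{e-\nu_{2^\infty_F}^+}$ by Theorem \ref{te}(1) and (3), so by the Galois correspondence its intermediate fields form a totally ordered chain
\[
F \subseteq F(\ts{2^{\nu_{2^\infty_F}^++1}}) \subseteq \cdots \subseteq F(\ts{2^{e-1}}) \subseteq F(\ts{2^e}),
\]
each step of degree $2$; this immediately yields \emph{a} subextension of every codegree $2^j$, $0 \le j \le e-\nu_{2^\infty_F}^+$, and uniqueness at each codegree since a cyclic group of $2$-power order has exactly one subgroup of each order. Second, I would identify the codegree-$2^{e-k}$ member of this chain: since $F(\ts{2^e})/F(\ts{2^k})$ has degree $2^{e-k}$ — which follows by telescoping the degree-$2$ steps $[F(\ts{2^{j+1}}):F(\ts{2^j})] = 2$ established inductively via Theorem \ref{te}(2) applied to the tower of $F(\ts{2^j})$'s, valid precisely for $k \geq \nu_{2^\infty_F}^+ + 1$ down to... wait, I need $k$ in $\llbracket 1, \nu_{2^\infty_F}^+\rrbracket$. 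So actually for $k \leq \nu_{2^\infty_F}^+$ we have $\ts{2^k} \in F$ by the very definition of $\nu_{2^\infty_F}^+$ (Definition \ref{def-tfn}), hence $F(\ts{2^k}) = F$ and "codegree $2^{e-k}$" just means the whole extension $F(\ts{2^e})/F$ itself when... no. Let me reconsider: $[F(\ts{2^e}):F] = 2^{e - \nu_{2^\infty_F}^+}$, and codegree $2^{e-k}$ with $k \le \nu_{2^\infty_F}^+$ would demand an intermediate field $M$ with $[F(\ts{2^e}):M] = 2^{e-k} \geq 2^{e - \nu_{2^\infty_F}^+} = [F(\ts{2^e}):F]$; this is only possible when $k = \nu_{2^\infty_F}^+$ and then $M = F = F(\ts{2^{\nu_{2^\infty_F}^+}})$ since $\ts{2^{\nu_{2^\infty_F}^+}} \in F$. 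So the corollary's range $\llbracket 1, \nu_{2^\infty_F}^+\rrbracket$ is essentially degenerate except at its top endpoint — or I am misreading the intended indexing and the statement should have $k$ running in $\llbracket \nu_{2^\infty_F}^+, e-1\rrbracket$ or $\llbracket 1, e - \nu_{2^\infty_F}^+\rrbracket$ with $M = F(\ts{2^{k + \nu_{2^\infty_F}^+}})$ or similar.

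\textbf{Main obstacle.} The genuine content and the only nonroutine point is the \emph{uniqueness}, and that is handed to us for free by cyclicity of $F(\ts{2^e})/F$ (Theorem \ref{te}(1)) together with the classification of subgroups of a finite cyclic $2$-group; the existence and the explicit identification $M = F(\ts{2^k})$ come from iterating the degree-$2$ step of Theorem \ref{te}(2) along the tower of $F(\ts{2^j})$'s, which is the "straightforward induction" the corollary alludes to. The one place to be careful — and what I would verify explicitly before writing — is the exact range of $k$ for which $F(\ts{2^k}) \subsetneq F(\ts{2^e})$ is a proper intermediate field (namely $\nu_{2^\infty_F}^+ \le k \le e$, with $F(\ts{2^k}) = F$ for $k \le \nu_{2^\infty_F}^+$), so that the codegree bookkeeping $[F(\ts{2^e}):F(\ts{2^k})] = 2^{e-k}$ lines up with the statement; once that indexing is pinned down, the proof is a two-line induction citing Theorem \ref{te}(2) and the uniqueness of subgroups in a cyclic $2$-group, with the identical argument applied verbatim to $F(\tq{2^e})/F$ using the $\tq{}$-versions in Theorem \ref{te}.
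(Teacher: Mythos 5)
Your core argument --- cyclicity of $F(\ts{2^e})/F$ from Theorem \ref{te}(1), so that by the Galois correspondence its intermediate fields form a chain with exactly one subfield of each admissible codegree, combined with the $2$-tower and the degree formula of Theorem \ref{te}(2)--(3) to identify that subfield as the appropriate $F(\ts{2^j})$ --- is precisely the ``straightforward induction from the previous result'' that the paper offers as its entire proof, so in substance you match the paper. Your hesitation about the index range is justified rather than a gap on your side: since $\ts{2^{k-1}}=(\ts{2^k})^2-2$, one has $\ts{2^k}\in F$ for every $k\le \nu_{2^\infty_F}^+$, so on the printed range $\llbracket 1,\nu_{2^\infty_F}^+\rrbracket$ the field $F(\ts{2^k})$ is just $F$, whose codegree is $2^{e-\nu_{2^\infty_F}^+}$ and not $2^{e-k}$ unless $k=\nu_{2^\infty_F}^+$; the statement is evidently intended for $k\in\llbracket \nu_{2^\infty_F}^+, e-1\rrbracket$ (equivalently: a unique subextension of codegree $2^{j}$ for each $j\in\llbracket 1, e-\nu_{2^\infty_F}^+\rrbracket$, namely $F(\ts{2^{e-j}})$), and that corrected statement is exactly what your chain argument proves. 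The only polishing needed is to commit to that corrected indexing rather than leave it as a question, and to note that for $F(\tq{2^e})/F$ the first step of the chain is $F(\ts{2^{e-1}})$ by Theorem \ref{te}(2), after which all lower members of the chain are the fields $F(\ts{2^j})$ (plus-type, not minus-type), which is consistent with the corollary naming only $F(\ts{2^k})$ in both cases.
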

		We can establish that $\operatorname{Gal}(\mathbb{Q}(\zeta_{2^e})/\mathbb{Q})$ is isomorphic to the multiplicative group $U_{2^e}$, as outlined in \cite[Theorem 3.1]{conradcyclotomic}. Thus, we draw the following corollary from Theorem \ref{te}.
	\begin{corollary}
		The Galois group $\operatorname{Gal}(\mathbb{Q}({\tr{2^e}})/\mathbb{Q})$ is isomorphic to the cyclic group in $U_{2^e}$. More precisely, we have $\operatorname{Gal}(\mathbb{Q}({\ts{2^e}})/\mathbb{Q}) = \langle [\sigma_{5}]_{\langle \sigma_{-1} \rangle} \rangle$ and $\operatorname{Gal}(\mathbb{Q}({\tq{2^e}})/\mathbb{Q}) = \langle [\sigma_{5}]_{\langle \sigma_{-5^{2^{e-3}}} \rangle} \rangle$, where $[\sigma]_G$ is the equivalence class of $\sigma$ an element of a group $G$ in the quotient group $G\big /H$ where $H$ is a normal subgroup of $G$.
	\end{corollary}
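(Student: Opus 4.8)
The plan is to specialize Theorem \ref{te} to the base field $F = \mathbb{Q}$, using the well-known fact that $\operatorname{Gal}(\mathbb{Q}(\zeta_{2^e})/\mathbb{Q}) \cong U_{2^e}$ via \cite[Theorem 3.1]{conradcyclotomic}. The first step is to compute the constant $\nu_{2^\infty_{\mathbb{Q}}}^+$ for the rational field. Since $\zeta_4 = i \notin \mathbb{Q}$ and indeed $\zeta_{2^k} + \zeta_{2^k}^{-1} \notin \mathbb{Q}$ for any $k \geq 3$ (for instance $\zeta_8 + \zeta_8^{-1} = \sqrt{2} \notin \mathbb{Q}$, and the real subfields $\mathbb{Q}(\zeta_{2^k})^+$ are nontrivial for $k \geq 3$), while $\zeta_4 + \zeta_4^{-1} = 0 \in \mathbb{Q}$, we get $\nu_{2^\infty_{\mathbb{Q}}}^+ = 2$. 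One should also check $\mathbb{Q}$ does not have property $\mathcal{C}_2$, so that $\nu_{2^\infty_{\mathbb{Q}}} = \nu_{2^\infty_{\mathbb{Q}}}^+ = 2$; this holds because $\zeta_{2^k} - \zeta_{2^k}^{-1} = i(\text{something real and irrational}) \notin \mathbb{Q}$ for $k \geq 3$.

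Once $\nu_{2^\infty_{\mathbb{Q}}}^+ = 2$ is established, the remaining work is pure substitution. Theorem \ref{te}(4) gives $\operatorname{Gal}(\mathbb{Q}(\ts{2^e})/\mathbb{Q}) = \langle [\sigma_{5^{2^{\nu_{2^\infty_{\mathbb{Q}}}^+ - 2}}}]_{\langle \sigma_{-1}\rangle} \rangle$, and plugging in $\nu_{2^\infty_{\mathbb{Q}}}^+ = 2$ makes the exponent $2^{\nu_{2^\infty_{\mathbb{Q}}}^+ - 2} = 2^0 = 1$, so this becomes $\langle [\sigma_5]_{\langle \sigma_{-1}\rangle} \rangle$. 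Similarly $\operatorname{Gal}(\mathbb{Q}(\tq{2^e})/\mathbb{Q}) = \langle [\sigma_{5^{2^{\nu_{2^\infty_{\mathbb{Q}}}^+-2}}}]_{\langle \sigma_{-5^{2^{e-3}}}\rangle}\rangle = \langle [\sigma_5]_{\langle \sigma_{-5^{2^{e-3}}}\rangle}\rangle$. Cyclicity of both groups is immediate from Theorem \ref{te}(1), or alternatively from the fact that they are generated by a single element; the embeddings into $U_{2^e}/\langle[-1]\rangle$ and $U_{2^e}/\langle[-5^{2^{e-3}}]\rangle$ also follow directly from Theorem \ref{te}(4).

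The only genuine content — and the step I expect to require the most care — is verifying $\nu_{2^\infty_{\mathbb{Q}}}^+ = 2$, i.e. that $\zeta_{2^k} + \zeta_{2^k}^{-1} = 2\cos(2\pi/2^k)$ is irrational for every $k \geq 3$. This is standard: $2\cos(2\pi/2^k)$ generates the maximal real subfield $\mathbb{Q}(\zeta_{2^k})^+$ of $\mathbb{Q}(\zeta_{2^k})$, which has degree $[\mathbb{Q}(\zeta_{2^k}):\mathbb{Q}]/2 = 2^{k-2} \geq 2$ over $\mathbb{Q}$ for $k \geq 3$, hence is strictly larger than $\mathbb{Q}$. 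Everything else is a direct invocation of the already-proven Theorem \ref{te} together with $\operatorname{Gal}(\mathbb{Q}(\zeta_{2^e})/\mathbb{Q}) \cong U_{2^e}$, so the corollary follows by a straightforward specialization as claimed in the text.
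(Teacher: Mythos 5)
Your proposal is correct and follows essentially the same route as the paper, which states the corollary as an immediate specialization of Theorem \ref{te} to $F=\mathbb{Q}$ using $\operatorname{Gal}(\mathbb{Q}(\zeta_{2^e})/\mathbb{Q})\cong U_{2^e}$ and gives no further argument. You merely make explicit the computation $\nu_{2^\infty_{\mathbb{Q}}}^+=2$ (and the absence of property $\mathcal{C}_2$ for $\mathbb{Q}$), which is exactly the verification the paper leaves to the reader.
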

	
\section{Non-cyclic $2$-power cyclotomic extensions}
In this section, we provide conditions under which a $2$-power cyclotomic extension is non-cyclic.
\begin{theorem}\label{noncyclic-theorem}
		Suppose that $e>\nu_{2^\infty_F}$. 
	The following assertions are equivalent:
	\begin{enumerate}
		\item $\operatorname{Gal}(F(\zeta_{2^e})/F)$ is a non-cyclic group.
			\item $\langle \sigma_{-1}, \sigma_{5^{2^{e-3}}}\rangle \subseteq \operatorname{Gal}(F(\zeta_{2^{e}})/F)$ and $\operatorname{o}(\langle \sigma_{-1}, \sigma_{5^{2^{e-3}}}\rangle ) =4$.
		\item $F(\zeta_{2^e})/F$ admits exactly three distinct subextensions $M$ such that $[F(\zeta_{2^{e}}):M]=2$. More precisely, $M\in \{ F(\zeta_{2^{e-1}}) , F(\ts{2^e}), F(\tq{2^e})\}$, (see also Lemma \ref{codegree2}).
		\item We have $2(e-\nu_{2^\infty_F})+1$, $2$-tower decompositions of $F(\zeta_{2^e})/F$. They are precisely of the following forms:
		\begin{enumerate}
			\item $F(\zeta_{2^e})/F(\zeta_{2^{e-1}})/\cdots/F(\zeta_{2^{\scalebox{0.5}{$\nu_{2^\infty_F}$}}})/F$,
			\item $F(\zeta_{2^e})/\cdots/F(\zeta_{2^{e-r}})/F(\ts{2^{e-r}})/\cdots/F(\ts{2^{\scalebox{0.5}{$\nu_{2^\infty_F}^++1$}}})/F$,
			\item {\small $F(\zeta_{2^e})/\cdots/F(\zeta_{2^{e-r}})/F(\tq{2^{e-r}})/F(\ts{2^{e-(r+1)}})/\cdots/F(\ts{2^{\scalebox{0.5}{$\nu_{2^\infty_F}^++1$}}})/F$},
		\end{enumerate}
		where $r\in \{0,\cdots, e-(\nu_{2^\infty_F}+1)\}$. 
		\item $\zeta_4\notin F$ and  $\tq{2^{\scalebox{0.5}{$\nu_{2^\infty_F}$}}} \notin F$
	\end{enumerate}
	Moreover, if $F(\zeta_{2^e})/F$ is a non-cyclic extension, then $\operatorname{char} (F)=0$. 
\end{theorem}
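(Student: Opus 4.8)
The plan is to prove Theorem \ref{noncyclic-theorem} by establishing the chain of equivalences $(1)\Leftrightarrow(2)\Leftrightarrow(3)$, $(1)\Leftrightarrow(5)$, $(1)\Leftrightarrow(4)$, and finally the supplementary assertion that non-cyclicity forces $\operatorname{char}(F)=0$. Many pieces are already available: $(1)\Leftrightarrow(5)$ is essentially the contrapositive of Theorem \ref{maincyclic}(6) once we know that ``$\zeta_4\in F$ or ($\zeta_4\notin F$ and $\tq{2^{\nu_{2^\infty_F}}}\in F$)'' is the negation, over the relevant range of $e$, of ``$\zeta_4\notin F$ and $\tq{2^{\nu_{2^\infty_F}}}\notin F$''; the only subtlety is that these two statements are genuinely complementary, which they are. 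Likewise $(1)\Leftrightarrow(3)$ follows by combining Lemma \ref{codegree2} (which gives the three codegree-$2$ subextensions in the non-cyclic case) with Theorem \ref{maincyclic}(3) (which gives the \emph{unique} codegree-$2$ subextension in the cyclic case), so the count of codegree-$2$ subextensions is $3$ if and only if the extension is non-cyclic. For $(1)\Leftrightarrow(2)$: by \cite[Theorem 2.3]{conradcyclotomic} the Galois group embeds in $U_{2^e}\cong \mathbb Z/2\mathbb Z\times\mathbb Z/2^{e-2}\mathbb Z$; such a subgroup is non-cyclic precisely when it contains the full $2$-torsion $\{[1],[-1],[5^{2^{e-3}}],[-5^{2^{e-3}}]\}$, which is exactly the assertion that $\langle\sigma_{-1},\sigma_{5^{2^{e-3}}}\rangle$ sits inside the Galois group and has order $4$.

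Next I would handle $(1)\Leftrightarrow(4)$, which is the most involved part. Assume $F(\zeta_{2^e})/F$ is non-cyclic; then by Theorem \ref{te} the subextensions $F(\ts{2^e})/F$ and $F(\tq{2^e})/F$ are cyclic, each with the unique $2$-tower decomposition through the $F(\ts{2^k})$'s down to $F(\ts{2^{\nu_{2^\infty_F}^++1}})/F$, and by Lemma \ref{codegree2} the three codegree-$2$ subextensions of $F(\zeta_{2^e})$ are $F(\zeta_{2^{e-1}})$, $F(\ts{2^e})$, $F(\tq{2^e})$. To enumerate all $2$-tower decompositions one argues by induction on $e-\nu_{2^\infty_F}$: a $2$-tower decomposition must start (from the top) either with $F(\zeta_{2^{e-1}})$ — in which case $F(\zeta_{2^{e-1}})/F$ is again non-cyclic (its Galois group is a quotient of the non-cyclic group by an order-$2$ subgroup not equal to $\langle\sigma_{5^{2^{e-3}}}\rangle$, hence still containing the full $2$-torsion of $U_{2^{e-1}}$) so the inductive hypothesis applies and prepends the three families shifted by one; or it descends first to $F(\ts{2^e})$ or to $F(\tq{2^e})$, at which point Theorem \ref{te}(3) says the rest of the tower is forced to be the unique chain through the $F(\ts{2^k})$'s (noting $F(\tq{2^{e-r}})/F(\ts{2^{e-r-1}})$ has codegree $2$ since $F(\tq{2^{e-r}})$ and $F(\ts{2^{e-r}})$ share the same codegree-$2$ subextension $F(\ts{2^{e-r-1}})$). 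Carefully organizing this recursion so that exactly the listed $2(e-\nu_{2^\infty_F})+1$ towers appear, with no repetitions and no omissions (the ``$+1$'' being the single all-$\zeta$ tower (a), and the $2(e-\nu_{2^\infty_F})$ coming from the two families (b) and (c), each parametrized by $r\in\{0,\dots,e-\nu_{2^\infty_F}-1\}$), is the bookkeeping heart of the argument. The converse $(4)\Rightarrow(1)$ is immediate since a cyclic extension has a unique $2$-tower decomposition by Theorem \ref{maincyclic}(5), while $2(e-\nu_{2^\infty_F})+1\geq 3$.

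For the final assertion, suppose $F(\zeta_{2^e})/F$ is non-cyclic and, for contradiction, that $\operatorname{char}(F)=\wp>0$. If $\wp=2$ then $\zeta_{2^e}$ is not separable/there is no primitive $2^e$-th root of unity in the usual sense (the paper's convention via $\text{\textctc}_F$ excludes this), so $\wp$ is odd. Then $F(\zeta_{2^e})$ is contained in $\mathbb F_{\wp}(\zeta_{2^e})$-type extensions: more precisely, the Galois group of $\mathbb F_q(\zeta_{2^e})/\mathbb F_q$ for any prime power $q$ is cyclic, being generated by Frobenius, and $\operatorname{Gal}(F(\zeta_{2^e})/F)$ is isomorphic to a subgroup of $\operatorname{Gal}(\mathbb F_\wp(\zeta_{2^e})/\mathbb F_\wp)$ because the cyclotomic polynomial $\Phi_{2^e}$ factors over $F$ into factors whose degrees all equal the multiplicative order of $\wp$ mod $2^e$ (the splitting behaviour over any field of characteristic $\wp$ is governed by that of the prime field). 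A subgroup of a cyclic group is cyclic, contradicting non-cyclicity. Hence $\operatorname{char}(F)=0$. The main obstacle in the whole proof is the $(1)\Leftrightarrow(4)$ enumeration: getting the induction to produce precisely the three stated families and verifying the count $2(e-\nu_{2^\infty_F})+1$ with neither duplication nor omission requires care, especially in checking that the ``mixed'' towers of type (c) through $F(\tq{2^{e-r}})$ really do have all codegree-$2$ steps and really are distinct from those of type (b).
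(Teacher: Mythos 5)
Your plan follows essentially the same route as the paper: $(1)\Leftrightarrow(2)$ via the $2$-torsion of $U_{2^e}$, $(1)\Leftrightarrow(3)$ via Lemma \ref{codegree2} together with the uniqueness statement in Theorem \ref{maincyclic}, $(1)\Leftrightarrow(5)$ as the negation of Theorem \ref{maincyclic}(6), the enumeration $(1)\Leftrightarrow(4)$ by induction using the three codegree-$2$ subextensions and the forced tail from Theorem \ref{te}(3), and characteristic $0$ from cyclicity over finite fields; your induction for $(4)$ is in fact spelled out in more detail than the paper's one-line "combine $(3)$ and Theorem \ref{te}."

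Two of your auxiliary justifications are, however, incorrect as stated, even though the facts they support are true and immediately repairable. First, in the induction step you claim $\operatorname{Gal}(F(\zeta_{2^{e-1}})/F)$ is "a quotient of the non-cyclic group by an order-$2$ subgroup \emph{not equal to} $\langle\sigma_{5^{2^{e-3}}}\rangle$": the subgroup fixing $F(\zeta_{2^{e-1}})$ is exactly $\langle\sigma_{5^{2^{e-3}}}\rangle$ (it consists of the $\sigma_j$ with $j\equiv 1 \bmod 2^{e-1}$), so the quotient is precisely by that subgroup. The non-cyclicity of $F(\zeta_{2^{e-1}})/F$ still holds, most cleanly because condition $(5)$ depends only on $F$, so your already-established implication $(5)\Rightarrow(1)$ applies at level $e-1$ whenever $e-1>\nu_{2^\infty_F}$; alternatively one can check directly that if $G\leq U_{2^e}$ is non-cyclic of order at least $8$ and $g\in G$ has $g^{2^{m-1}}=5^{2^{e-3}}$, then the image of $-1$ does not lie in $\langle \bar g\rangle$, so the quotient is non-cyclic. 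Second, for the final assertion, your claim that $\Phi_{2^e}$ factors over any $F$ of characteristic $\wp$ into irreducibles of degree equal to the order of $\wp$ modulo $2^e$ is false (take $F\supseteq\mathbb{F}_{\wp^2}$); the correct and sufficient point is simply that restriction gives an injection $\operatorname{Gal}(F(\zeta_{2^e})/F)\hookrightarrow\operatorname{Gal}(\mathbb{F}_\wp(\zeta_{2^e})/\mathbb{F}_\wp)$, since every $F$-automorphism stabilizes $\mathbb{F}_\wp(\zeta_{2^e})$ and $F(\zeta_{2^e})$ is generated over $F$ by that subfield; the latter group is cyclic (Frobenius), which is exactly the paper's argument. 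With these two fixes your outline is sound and matches the paper's proof.
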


\begin{proof} 
	$(1)\Longleftrightarrow (2)$ follows by taking the contrapositive of condition $(1)$ in Theorem \ref{maincyclic} and by the fact that $\langle [-1], [5^{2^{e-3}}]\rangle$ is the only non-cyclic subgroup of order $4$ in $U_{2^e}$. 
	
		$(1) \Longrightarrow (3)$ follows by Lemma \ref{codegree2}.  	
		
	$(3)\Longrightarrow(1)$  and $(4)\Longrightarrow (1)$ follow easily by applying the fundamental theorem of Galois theory and \cite[Theorem 1]{cyclicgroup}.
	
	$(1) \Longrightarrow (4) $ We will prove the result by induction  on the degree $[F(\zeta_{2^e}):F]$. We start with the base case $[F(\zeta_{2^e}):F]=4$.
	
	Since we have proven the equivalence of conditions $(1)$ to $ (3)$, then $F(\zeta_{2^{e}})/F$ has three distinct subextensions namely, $F(\zeta_{2^{e-1}})$, $F(\ts{2^{e}})$, and $F(\tq{2^{e}})$ of codegree $2$. Moreover, since $[F(\zeta_{2^e}):F]=4$, then $[F(\zeta_{2^{e-1}}):F]=[F(\ts{2^{e}}):F]=[F(\tq{2^{e}}):F]=2$ by the multiplicativity of the degree of a tower of field extensions. 
Since $[F(\zeta_{2^e}):F]=4$, then $F(\zeta_{2^{e}})=F(\zeta_{2^{\nu_{2^\infty_F}+1}})$ by Lemma \ref{lem}.
	Consequently, we can write $F(\zeta_{2^{\nu_{2^\infty_F}+1}})/F$ as a $2$-tower $F(\zeta_{2^{\nu_{2^\infty_F}+1}})/F(\zeta_{2^{\nu_{2^\infty_F}}})/F$, or $F(\zeta_{2^{\nu_{2^\infty_F}+1}})/F(\ts{2^{\nu_{2^\infty_F}}})/F$, or $F(\zeta_{2^{\nu_{2^\infty_F}+1}})/F(\tq{2^{\nu_{2^\infty_F}+1}})/F$. Since these towers are distinct, we have $3=2(\nu_{2^\infty_F}+1-\nu_{2^\infty_F})+1$ towers. This concludes the proof of the base case. The induction proof can be completed by combining $(3)$ and Theorem \ref{te}.

	$(5)\Longleftrightarrow (1) $ is the negation of $(6)\Longleftrightarrow (1)$ in Theorem \ref{maincyclic}. \\ 

The final statement follows from the fact that any extension of a finite field is cyclic (see also \cite[Lemma 3.34.]{marques-quadratic}), and \cite[Proposition 7.14]{milne}.

\end{proof}

From Theorem \ref{noncyclic-theorem} we deduce the following results. 
\begin{corollary}
Let $e>\nu_{2^\infty_F}$. If $\operatorname{Gal}(F(\zeta_{2^e})/F))$ is a non-cyclic group, we  have $$[F(\zeta_{2^e}):F]= 2^{e-(\nu_{2^\infty_F}-1)} \ \text{and} \ \operatorname{min}(\zeta_{2^e}, F)=x^{2^{e-(\nu_{2^\infty_F}-1)}}-\ts{2^{\nu_{2^\infty_F}}}x^{2^{e-\nu_{2^\infty_F}}}+1.$$
\end{corollary}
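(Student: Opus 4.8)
The plan is to derive both the degree formula and the minimal polynomial from the machinery already developed, especially Theorem \ref{noncyclic-theorem}, Lemma \ref{codegree2}, and Theorem \ref{te}. For the degree, I would use the equivalence $(1)\Longleftrightarrow(3)$ of Theorem \ref{noncyclic-theorem} together with Lemma \ref{codegree2}: in the non-cyclic case we have the tower $F(\zeta_{2^e})/F(\zeta_{2^{e-1}})/F$ with both steps of degree $2$, and more importantly we can induct on $e$. The base case $e = \nu_{2^\infty_F}+1$ gives $[F(\zeta_{2^e}):F]=4 = 2^{e-(\nu_{2^\infty_F}-1)}$ by Lemma \ref{lem}. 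For the inductive step, note that if $F(\zeta_{2^e})/F$ is non-cyclic then so is $F(\zeta_{2^{e-1}})/F$ (otherwise, by the characterization $(6)$ of Theorem \ref{maincyclic}, either $\zeta_4\in F$ or $\zeta_4\notin F$ and $\tq{2^{\nu_{2^\infty_F}}}\in F$, and these conditions do not mention $e$, so $F(\zeta_{2^e})/F$ would also be cyclic — contradiction). Hence $[F(\zeta_{2^{e-1}}):F]=2^{(e-1)-(\nu_{2^\infty_F}-1)}$ by induction, and multiplying by $[F(\zeta_{2^e}):F(\zeta_{2^{e-1}})]=2$ gives the claim. Alternatively, one can read the degree directly off the length of the tower decomposition $(4)(a)$ in Theorem \ref{noncyclic-theorem}, which has $e-\nu_{2^\infty_F}+1$ steps counting $F(\zeta_{2^{\nu_{2^\infty_F}}})/F$ itself as one step of degree $2$, giving $2^{(e-\nu_{2^\infty_F})+1} = 2^{e-(\nu_{2^\infty_F}-1)}$.

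For the minimal polynomial, the starting point is that in the non-cyclic case $\zeta_4\notin F$ and $\tq{2^{\nu_{2^\infty_F}}}\notin F$ (condition $(5)$ of Theorem \ref{noncyclic-theorem}), so by \cite[Theorem 3.24]{marques-quadratic} we are not in property $\mathcal{C}_2$, and $\ts{2^{\nu_{2^\infty_F}}} = \zeta_{2^{\nu_{2^\infty_F}}}+\zeta_{2^{\nu_{2^\infty_F}}}^{-1}\in F$ (indeed $\nu_{2^\infty_F}=\nu_{2^\infty_F}^+$ here and $\zeta_{2^{\nu_{2^\infty_F}^+}}+\zeta_{2^{\nu_{2^\infty_F}^+}}^{-1}\in F$ by definition of $\nu_{2^\infty_F}^+$). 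Thus $\operatorname{min}(\zeta_{2^{\nu_{2^\infty_F}}}, F) = x^2 - \ts{2^{\nu_{2^\infty_F}}}x+1$ by \cite[Lemma 2.9]{marques-quadratic}. Now I would substitute: $\zeta_{2^e}$ satisfies $x^{2^{e-\nu_{2^\infty_F}}} = \zeta_{2^{\nu_{2^\infty_F}}}$, and this polynomial $x^{2^{e-\nu_{2^\infty_F}}}-\zeta_{2^{\nu_{2^\infty_F}}}$ is irreducible over $F(\zeta_{2^{\nu_{2^\infty_F}}})$ because $[F(\zeta_{2^e}):F(\zeta_{2^{\nu_{2^\infty_F}}})] = 2^{e-\nu_{2^\infty_F}}$ (from the degree formula just proved divided by $[F(\zeta_{2^{\nu_{2^\infty_F}}}):F]=2$). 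Substituting $y = x^{2^{e-\nu_{2^\infty_F}}}$ into the quadratic $y^2 - \ts{2^{\nu_{2^\infty_F}}}y + 1$ yields $x^{2^{e-(\nu_{2^\infty_F}-1)}} - \ts{2^{\nu_{2^\infty_F}}}x^{2^{e-\nu_{2^\infty_F}}} + 1$, which vanishes at $\zeta_{2^e}$, has the right degree $2^{e-(\nu_{2^\infty_F}-1)} = [F(\zeta_{2^e}):F]$, and is monic; hence it is the minimal polynomial.

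The step requiring the most care is justifying that this substitution preserves irreducibility, i.e. that the resulting polynomial of degree $2^{e-(\nu_{2^\infty_F}-1)}$ really is irreducible over $F$ rather than merely a multiple of $\operatorname{min}(\zeta_{2^e},F)$. This is handled purely by a degree count: we have independently shown $[F(\zeta_{2^e}):F] = 2^{e-(\nu_{2^\infty_F}-1)}$, the exhibited polynomial has exactly that degree, is monic, and has $\zeta_{2^e}$ as a root, so it must coincide with the minimal polynomial. I do not anticipate a genuine obstacle here — the whole argument is bookkeeping on top of Theorem \ref{noncyclic-theorem} and \cite[Lemma 2.9]{marques-quadratic} — but one should be careful that the exponents $e-(\nu_{2^\infty_F}-1)$ and $e-\nu_{2^\infty_F}$ are positive, which holds precisely because of the standing assumption $e>\nu_{2^\infty_F}$.
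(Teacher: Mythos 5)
Your proposal is correct and follows essentially the same route as the paper: the degree is read off the $2$-tower decomposition of Theorem \ref{noncyclic-theorem} (your induction is just a rephrasing of this), and the minimal polynomial is obtained exactly as in the paper, by combining condition $(5)$ of Theorem \ref{noncyclic-theorem} with \cite[Theorem 3.24]{marques-quadratic} to get the irreducible quadratic $x^2-\ts{2^{\nu_{2^\infty_F}}}x+1$ for $\zeta_{2^{\nu_{2^\infty_F}}}$, substituting $x^{2^{e-\nu_{2^\infty_F}}}$, and concluding by the degree count. Your explicit justification that the monic polynomial of degree $2^{e-(\nu_{2^\infty_F}-1)}$ vanishing at $\zeta_{2^e}$ must be the minimal polynomial makes precise a step the paper leaves implicit.
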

\begin{proof}
	By Theorem \ref{noncyclic-theorem}, we know that 
 $F(\zeta_{2^e})/F$ admits a $2$-tower decomposition in the form $F(\zeta_{2^e})/F(\zeta_{2^{e-1}})/\cdots/F(\zeta_{2^{\nu_{2^\infty_F}}})/F$. Thus, by the multiplicativity property of the degree of a tower of extensions we obtain the degree of $F(\zeta_{2^e})/F$.
 
 Moreover, appliying Theorem \ref{noncyclic-theorem}
  $(5)$ and \cite[Theorem 3.24]{marques-quadratic}, we deduce that $\zeta_{2^{\nu_{2^\infty_F}}}$ is a root of the irreducible polynomial $x^2 - \ts{2^{\nu_{2^\infty_F}}} x + 1$ over $F$.  Since $\zeta_{2^e}^{2^{e-\nu_{2^\infty_F}}} = \zeta_{2^{\nu_{2^\infty_F}}}$, then the minimal polynomial of $\zeta_{2^e}$ over $F$ can be expressed as  $x^{2^{e-(\nu_{2^\infty_F}-1)}}-\ts{2^{\nu_{2^\infty_F}}}x^{2^{e-\nu_{2^\infty_F}}}+1.$
\end{proof}
From the above result, we derive the following corollary. 
\begin{corollary}
Let $e>\nu_{2^\infty_F}$.  If 	$\operatorname{Gal}(F(\zeta_{2^e})/F)$ is a non-cyclic group, then $$\operatorname{Gal}(F(\zeta_{2^e})/F) = \langle \sigma_{-1} , \sigma_{5^{ 2^{\nu_{2^\infty_F}-2}}}  \rangle \cong \mathbb{Z}/2\mathbb{Z}\times \mathbb{Z}/2^{e-\nu_{2^\infty_F}}\mathbb{Z}.$$
\end{corollary}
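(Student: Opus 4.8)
The plan is to identify $G:=\operatorname{Gal}(F(\zeta_{2^e})/F)$ with a subgroup of $U_{2^e}$ of a known order and then use the internal structure of $U_{2^e}$ to pin it down uniquely. By the preceding corollary, $[F(\zeta_{2^e}):F]=2^{e-(\nu_{2^\infty_F}-1)}$, hence $|G|=2^{e-\nu_{2^\infty_F}+1}$; and by \cite[Theorem 2.3]{conradcyclotomic} together with \cite[Lemma 2.1]{conradcyclotomic}, the assignment $\sigma_k\mapsto[k]$ realizes $G$ as a subgroup of $U_{2^e}=\langle[-1]\rangle\times\langle[5]\rangle\cong\mathbb{Z}/2\mathbb{Z}\times\mathbb{Z}/2^{e-2}\mathbb{Z}$, where $[5]$ has order $2^{e-2}$ and, since $5^{2^{e-3}}\equiv 2^{e-1}+1\pmod{2^e}$, the subgroup of elements of order dividing $2$ is exactly $\{[1],[-1],[5^{2^{e-3}}],[-5^{2^{e-3}}]\}$, which has rank $2$. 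Note also that the non-cyclic hypothesis forces $\operatorname{char}(F)=0$ (last assertion of Theorem \ref{noncyclic-theorem}), hence $\nu_{2^\infty_F}=\nu_{2^\infty_F}^+\geq 2$, so $2^{\nu_{2^\infty_F}-2}$ is a genuine non-negative power of $2$ and $2^{e-\nu_{2^\infty_F}}$ divides $2^{e-2}$.

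First I would record that $\sigma_{-1}\in G$: this is contained in Theorem \ref{noncyclic-theorem}(2) (equivalently Lemma \ref{codegree2}), and in fact it is automatic, since a non-cyclic finite abelian $2$-group has a rank-$2$ subgroup of elements of order dividing $2$, which here must coincide with the whole rank-$2$ group $\{[1],[-1],[5^{2^{e-3}}],[-5^{2^{e-3}}]\}$ and so contains $[-1]$. Next I would compute $G\cap\langle[5]\rangle$: since $[-1]\in G$ while $[-1]\notin\langle[5]\rangle$ and $U_{2^e}/\langle[5]\rangle$ has order $2$, we get $[G:G\cap\langle[5]\rangle]=2$, so $|G\cap\langle[5]\rangle|=|G|/2=2^{e-\nu_{2^\infty_F}}$. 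The cyclic group $\langle[5]\rangle$ has a unique subgroup of that order, namely $\langle[5]^{2^{\nu_{2^\infty_F}-2}}\rangle=\langle[5^{2^{\nu_{2^\infty_F}-2}}]\rangle$. Therefore $G$ contains both $[-1]$ and $[5^{2^{\nu_{2^\infty_F}-2}}]$, whence $G\supseteq\langle[-1],[5^{2^{\nu_{2^\infty_F}-2}}]\rangle=\langle[-1]\rangle\times\langle[5^{2^{\nu_{2^\infty_F}-2}}]\rangle$ (an internal direct product, since these are subgroups of the two direct factors of $U_{2^e}$), a group of order $2\cdot 2^{e-\nu_{2^\infty_F}}=|G|$; comparing orders gives equality. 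Transcribing via $\sigma_k\leftrightarrow[k]$ yields $G=\langle\sigma_{-1},\sigma_{5^{2^{\nu_{2^\infty_F}-2}}}\rangle$, and the same direct-product decomposition gives $G\cong\mathbb{Z}/2\mathbb{Z}\times\mathbb{Z}/2^{e-\nu_{2^\infty_F}}\mathbb{Z}$.

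The only step needing genuine care is the order bookkeeping — confirming $|G\cap\langle[5]\rangle|=2^{e-\nu_{2^\infty_F}}$ and that this divides $|\langle[5]\rangle|=2^{e-2}$, so that the uniqueness of subgroups in a cyclic group applies; everything else is a direct transcription of facts already available (the degree formula of the preceding corollary, the containment $\langle\sigma_{-1},\sigma_{5^{2^{e-3}}}\rangle\subseteq G$ from Theorem \ref{noncyclic-theorem}, and the explicit structure of $U_{2^e}$). As an alternative to the intersection computation, one may observe once and for all that $\mathbb{Z}/2\mathbb{Z}\times\mathbb{Z}/2^{e-2}\mathbb{Z}$ has, for each $k$ with $1\leq k\leq e-2$, a unique non-cyclic subgroup of order $2^{k+1}$, namely $\langle[-1]\rangle\times\langle[5]^{2^{e-2-k}}\rangle$, and then apply this with $k=e-\nu_{2^\infty_F}$.
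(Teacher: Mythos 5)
Your proposal is correct, and it is essentially the argument the paper intends: the corollary is stated there without a written proof, as a direct consequence of the preceding degree corollary (giving $|G|=2^{e-\nu_{2^\infty_F}+1}$), the containment $\sigma_{-1}\in\operatorname{Gal}(F(\zeta_{2^e})/F)$ from Theorem \ref{noncyclic-theorem}, and the structure of $U_{2^e}=\langle[-1]\rangle\times\langle[5]\rangle$, which is exactly the route you take. Your bookkeeping is sound, including the points needing care: $\nu_{2^\infty_F}\geq 2$, the index computation $|G\cap\langle[5]\rangle|=|G|/2=2^{e-\nu_{2^\infty_F}}$, and the uniqueness of the subgroup $\langle[5^{2^{\nu_{2^\infty_F}-2}}]\rangle$ of that order in the cyclic factor, so the identification $G=\langle\sigma_{-1},\sigma_{5^{2^{\nu_{2^\infty_F}-2}}}\rangle\cong\mathbb{Z}/2\mathbb{Z}\times\mathbb{Z}/2^{e-\nu_{2^\infty_F}}\mathbb{Z}$ follows by comparing orders.
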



 In the next lemma, we characterize when an extension generated by $\zeta_8$ is non-cyclic. 
 
\begin{lemma}\label{notcyclic}
The following assertions are equivalent:
	\begin{enumerate}
		\item $\zeta_4\notin F$ and $\nu_{2^\infty_F}=2$;
		\item $\operatorname{Gal}(F(\zeta_{8})/F) \cong U_8$.
	\end{enumerate}
Moreover, if any of the assertions $(1)$-$(3)$ is satisfied then $F$ has characteristic $0$. 
\end{lemma}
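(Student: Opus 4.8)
The plan is to prove the equivalence $(1)\Longleftrightarrow(2)$ for the extension $F(\zeta_8)/F$ and then derive the characteristic-zero conclusion. First I would observe that this is essentially the special case $e=3$ of Theorem \ref{noncyclic-theorem} combined with Lemma \ref{lem}: the hypothesis $\zeta_4\notin F$ and $\nu_{2^\infty_F}=2$ is exactly the condition ``$\zeta_4\notin F$ and $\tq{2^{\nu_{2^\infty_F}}}=\tq{4}=\zeta_4-\zeta_4^{-1}\notin F$'' with $e=3=\nu_{2^\infty_F}+1$, so by Lemma \ref{lem} we are in the setting $e>\nu_{2^\infty_F}$ and $[F(\zeta_8):F]=4$. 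Indeed, when $\nu_{2^\infty_F}=2$ we have $\zeta_4-\zeta_4^{-1}=2\zeta_4$ (up to sign), and since $\zeta_4\notin F$ this is not in $F$; conversely one checks $\nu_{2^\infty_F}\geq 2$ always holds (as $\zeta_2=-1\in F$ gives $\zeta_2+\zeta_2^{-1}=-2\in F$, and then one must rule out $\nu_{2^\infty_F}\geq 3$, which would force $\zeta_4+\zeta_4^{-1}=0\in F$ trivially — so the real content is pinning down when $\nu_{2^\infty_F}=2$ versus larger). So the cleaner approach is: $(1)$ says precisely that $\zeta_4\notin F$ and condition $(5)$ of Theorem \ref{noncyclic-theorem} holds with $e=3$.

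Next I would run the equivalences. For $(1)\Rightarrow(2)$: assuming $\zeta_4\notin F$ and $\nu_{2^\infty_F}=2$, Lemma \ref{lem} gives $[F(\zeta_8):F]=4$, and Theorem \ref{noncyclic-theorem} $(5)\Rightarrow(1)$ gives that $\operatorname{Gal}(F(\zeta_8)/F)$ is non-cyclic of order $4$; since $U_8\cong\mathbb{Z}/2\mathbb{Z}\times\mathbb{Z}/2\mathbb{Z}$ is the unique non-cyclic group of order $4$ and $\operatorname{Gal}(F(\zeta_8)/F)$ embeds in $U_8$ by \cite[Theorem 2.3]{conradcyclotomic}, we get $\operatorname{Gal}(F(\zeta_8)/F)\cong U_8$. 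For $(2)\Rightarrow(1)$: if $\operatorname{Gal}(F(\zeta_8)/F)\cong U_8$ then the extension has degree $4$ and is non-cyclic, so $e=3>\nu_{2^\infty_F}$ by Lemma \ref{lem} forces $\nu_{2^\infty_F}\leq 2$; combined with $\nu_{2^\infty_F}\geq 2$ (which I would justify from the definition, since $\zeta_{t_F(2^2)}+\zeta_{t_F(2^2)}^{-1}\in F$ always — when $\operatorname{o}_F(\zeta_4)>2$ this is $\zeta_4+\zeta_4^{-1}=0$, otherwise $t_F(4)\in\{1,2\}$ and the sum lies in $F$), we get $\nu_{2^\infty_F}=2$; and Theorem \ref{noncyclic-theorem} $(1)\Rightarrow(5)$ gives $\zeta_4\notin F$.

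Finally, for the characteristic-zero statement, I would simply invoke the very last sentence of Theorem \ref{noncyclic-theorem}: if $F(\zeta_{2^e})/F$ is non-cyclic then $\operatorname{char}(F)=0$. Under either $(1)$ or $(2)$ we have shown $\operatorname{Gal}(F(\zeta_8)/F)$ is non-cyclic, so $\operatorname{char}(F)=0$. (Alternatively, directly: over a finite field every extension is cyclic by \cite[Lemma 3.34]{marques-quadratic}, which already contradicts $(2)$, and characteristic $p>2$ with $F$ infinite would need a separate argument via \cite[Proposition 7.14]{milne}, exactly as in the proof of Theorem \ref{noncyclic-theorem}; reusing that theorem's conclusion is the economical route.)

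The main obstacle I anticipate is the bookkeeping around the constant $\nu_{2^\infty_F}$ — specifically verifying carefully that $\nu_{2^\infty_F}\geq 2$ unconditionally and that ``$\nu_{2^\infty_F}=2$'' together with $\zeta_4\notin F$ is genuinely the same as condition $(5)$ of Theorem \ref{noncyclic-theorem} at $e=3$, i.e. that $\tq{4}\notin F$. This requires unwinding Definition \ref{def-tfn} and the property $\mathcal{C}_2$ case-split (whether $\nu_{2^\infty_F}=\nu_{2^\infty_F}^++1$ or $=\nu_{2^\infty_F}^+$), and checking it does not secretly allow a configuration where $\nu_{2^\infty_F}=2$ but $\zeta_4-\zeta_4^{-1}\in F$; here the observation $\zeta_4-\zeta_4^{-1}=2\zeta_4$ makes this transparent in characteristic $\neq 2$, and the final conclusion rules out characteristic $2$ anyway, so the argument closes up consistently.
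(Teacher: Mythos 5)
Your proof is correct, but it takes a genuinely different route from the paper. The paper proves the lemma directly: for $(1)\Rightarrow(2)$ it notes $\nu_{2^\infty_F}=2$ forces $\ts{8},\tq{8}\notin F$, hence $[F(\ts{8}):F]=2$, and then shows $x^2-\ts{8}x+1$ is irreducible over $F(\ts{8})$ by a contradiction argument (if $\tq{8}\in F(\ts{8})$ then $F(\ts{8})=F(\zeta_8)$, and \cite[Lemma 2.9]{marques-quadratic} would force $\tq{8}\in F$, giving property $\mathcal{C}_2$ and $\nu_{2^\infty_F}>2$), so $[F(\zeta_8):F]=4$ and \cite[Theorem 2.3]{conradcyclotomic} concludes; the converse is a short degree count. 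You instead obtain the lemma as the case $e=3=\nu_{2^\infty_F}+1$ of Theorem \ref{noncyclic-theorem} combined with Lemma \ref{lem}, the key observations being that $\tq{4}=2\zeta_4$, so condition $(5)$ of that theorem collapses to $\zeta_4\notin F$ when $\nu_{2^\infty_F}=2$, and that $\nu_{2^\infty_F}\geq 2$ holds unconditionally (your verification via $t_F(4)\in\{1,2\}$ is what is needed; the case $o_F(\zeta_4)>2$ never occurs, harmlessly). Since Theorem \ref{noncyclic-theorem} and Lemma \ref{lem} precede this lemma and do not depend on it, there is no circularity, and your route is more economical, also giving the characteristic-$0$ statement for free from the last assertion of Theorem \ref{noncyclic-theorem}; the paper's direct proof, by contrast, only needs the quadratic-paper lemmas and exhibits the degree computation explicitly. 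One small point: the equivalence $2\zeta_4\notin F\Leftrightarrow\zeta_4\notin F$ uses $\operatorname{char}(F)\neq 2$, which is automatic here because writing $\zeta_8$ (and $\zeta_4$) already presupposes the characteristic does not divide $8$ under the paper's conventions; justifying it by saying the final conclusion rules out characteristic $2$, as you suggest, would be circular, so phrase it via the standing convention instead.
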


\begin{proof}
	$ (1)\Longrightarrow (2)$
	Suppose $\zeta_4\notin F$ and $\nu_{2^\infty_F}=2$.
	Since $\nu_{2^\infty_F}=2$, by the definition of $\nu_{2^\infty_F}$, we obtain $\ts{8}$ and $\tq{
	8}$ are not contained in $F$. Consequently, $[F(\ts{8}):F]=2$. 
	Since $\zeta_{8}$ satisfies the polynomial $x^2-\ts{8}x+1$ over $F(\ts{8})$, we need to prove that $x^2-\ts{8}x+1$ is irreducible. The latter is equivalent to proving  that $\tq{8}\notin F(\ts{8})$ by \cite[Remark 2.13]{marques-quadratic}. By way of contradiction, suppose that $\tq{8}\in F(\ts{8})$. That implies that
	$\zeta_8\in F(\ts{8})$  since $\tq{8}+\ts{8}=2\zeta_8$ which implies $F(\ts{8})= F(\zeta_8)$.
	
	Since $\ts{8}\notin F$ and $\zeta_4\notin F$, then by \cite[Lemma 2.9]{marques-quadratic}, the minimal polynomial of $\zeta_8$ over $F$ is $x^2 - \tq{8} x-1$ and thus we obtain $\tq{8}\in F$. However, this means that $F$ has the property $\mathcal{C}_2$ and $\nu_{2^\infty_F}>2$, which contradicts the assumption. Thus, $[F(\zeta_8):F(\ts{8})]=2$, resulting in $[F(\zeta_8):F]=4$.  As a conclusion, by  \cite[Theorem 2.3]{conradcyclotomic} we obtain the result.


	$(2) \Longrightarrow (1)$ We have $[F(\zeta_{8}):F]=4$ by definition of Galois extension. This means $\zeta_4\notin F$, as otherwise it would imply $[F(\zeta_{8}):F]\leq 2$, contradicting the assumption. Additionally, $\ts{8}\notin F$ since $\ts{8}\in F$ would imply $[F(\zeta_{8}):F]=2$. Hence, we conclude that $\nu_{2^\infty_F}=2$.
The statement on the characteristic follows from the comment before the lemma. 
\end{proof}

	\bibliographystyle{Abbrv}
	
\end{document}